\documentclass[reqno]{amsart}
\usepackage{amssymb,amsmath,latexsym,amscd}
\usepackage{hyperref}
\usepackage{enumerate}
\usepackage{tikz-cd}
\usepackage{mathrsfs}
\usepackage{xparse}
\usepackage{mathscinet}
\usepackage{todonotes}
\usepackage{paralist}

\DeclareMathOperator{\im}{im}
\DeclareMathOperator{\op}{op}

\DeclareMathOperator{\s}{span}
\DeclareMathOperator{\ind}{ind}

\renewcommand{\c}{\colon}

\newcommand{\N}{{\mathbb{N}}}

\renewcommand{\phi}{\varphi}
\newcommand{\ve}{\varepsilon}

\newcommand{\pt}{\widehat{\otimes}}
\newcommand{\hk}{\hookrightarrow}
\newcommand{\la}{\langle}
\newcommand{\ra}{\rangle}

\DeclareMathOperator{\Ext}{Ext}
\DeclareMathOperator{\spn}{span}
\newcommand*{\CC}{\mathbb C}
\newcommand*{\DD}{\mathbb D}
\newcommand*{\cB}{\mathscr B}
\newcommand*{\cH}{\mathscr H}
\newcommand*{\lmod}{\mbox{-}\!\mathop{\mathsf{mod}}}
\newcommand*{\rmod}{\mathop{\mathsf{mod}}\!\mbox{-}}
\newcommand*{\bimod}{\mbox{-}\!\mathop{\mathsf{mod}}\!\mbox{-}}
\newcommand*{\CLCS}{\mathsf{CLCS}}
\newcommand*{\Fr}{\mathsf{Fr}}
\newcommand*{\Ban}{\mathsf{Ban}}
\newcommand*{\Vect}{\mathsf{Vect}}
\newcommand*{\alg}{\mathsf{alg}}
\newcommand*{\CBDF}{\mathsf{CBDF}}
\newcommand*{\sC}{\mathsf{C}}

\newcommand*{\Z}{\mathbb Z}
\newcommand*{\dL}{\mathrm L}
\newcommand*{\dR}{\mathrm R}
\newcommand*{\Ptens}{\mathop{\widehat\otimes}}
\newcommand*{\ptens}[1]{\mathop{\widehat\otimes}_{#1}}

\newcommand*{\wt}{\widetilde}
\newcommand*{\ol}{\overline}
\newcommand*{\xra}{\xrightarrow}
\newenvironment{mycompactenum}{\pltopsep=5pt\begin{compactenum}[\upshape (i)]}%
{\end{compactenum}}

\hypersetup{pdfstartview={XYZ null null 1.25}}

\newtheorem{theorem}{Theorem}[section]
\newtheorem{prop}[theorem]{Proposition}
\newtheorem{lemma}[theorem]{Lemma}
\newtheorem{corollary}[theorem]{Corollary}
\theoremstyle{definition}
\newtheorem{definition}[theorem]{Definition}
\newtheorem{example}[theorem]{Example}
\theoremstyle{remark}
\newtheorem{rem}[theorem]{Remark}

\NewDocumentCommand{\tens}{t_}
 {%
  \IfBooleanTF{#1}
   {\tensop}
   {\widehat{\otimes}}%
 }
\NewDocumentCommand{\tensop}{m}
 {%
  \mathbin{\mathop{\widehat{\otimes}}\displaylimits_{#1}}%
 }

\title[Topological amenability and K\"othe co-echelon algebras]{Topological amenability\\
and K\"othe co-echelon algebras}
\author{Alexei Yu. Pirkovskii}
\address{Faculty of Mathematics\\
HSE University\\
6 Usacheva, 119048 Moscow, Russia}
\email{aupirkovskii@hse.ru}

\author{Krzysztof Piszczek}
\address{Faculty of Mathematics and Computer Science\\
A. Mickiewicz University in Pozna{\'n}\\
Umultowska 87\\61-614 Pozna{\'n}\\Poland}
\email{kpk@amu.edu.pl}

\begin{document}
\date{}
\begin{abstract}
We introduce a notion of a topologically flat locally convex module, which extends
the notion of a flat Banach module and which is well adapted to the nonmetrizable setting
(and especially to the setting of DF-modules). By using this notion, we introduce topologically
amenable locally convex algebras and we show that a complete barrelled DF-algebra
is topologically amenable if and only if it is Johnson amenable, extending thereby
Helemskii--Sheinberg's criterion for Banach algebras. As an application, we completely
characterize topologically amenable K\"othe co-echelon algebras.
\end{abstract}
\maketitle

\footnotetext[1]{{\bf Keywords:} DF-space/algebra, K\"{o}the co-echelon space/algebra, projective/flat module, amenable algebra.}
\footnotetext[2]{{\bf 2010 Mathematics Subject Classification:} Primary: 46H05, 46M18.
Secondary: 46A04, 46A13, 46M05, 47B47.}
\footnotetext[3]{{\em Acknowledgement.} The research of the first author has been supported
by the Russian Foundation for Basic Research
grant no. 19-01-00447. The research of the second author
has been supported by the National Center of Science, Poland, grant no. UMO-2013/10/A/ST1/00091.}

\section{Introduction}

The paper is devoted to the study of amenability properties in the framework of DF-algebras. These are
algebras with jointly continuous multiplication whose underlying
topological vector spaces are DF-spaces. The category of DF-spaces contains spaces of distributions, e.g. tempered distributions or distributions with compact support. More generally, duals of Fr\'echet spaces belong to this category. In particular, the duals of
K\"othe echelon spaces are DF-spaces. These are the so-called K\"othe co-echelon spaces and this class of objects will be of particular importance for us.

The general study of amenable DF-algebras meets two major difficulties which come from the facts that the category of DF-spaces does not respect subspaces and that there is no Open Mapping Theorem available. This implies that the two well-known approaches to amenability
(namely, Johnson's approach based on derivations \cite{BEJ}
and Helemskii--Sheinberg's approach based on flat modules \cite{Hel_Shein}) which are equivalent in the category of Banach (or Fr\'echet) algebras are potentially inequivalent
in the DF-algebra framework (however, we have no explicit counterexample so far).
The main aim of this paper is to modify the notion of a flat module in such a way that
the above-mentioned problem disappears. The resulting notion of a {\em topologically flat} module
is equivalent to that of a flat module in the case of Banach (or Fr\'echet) modules,
but, in our view, is better adapted to the nonmetrizable setting.
We define {\em topologically amenable algebras} in terms of topologically flat modules,
and we show that topological amenability for complete barrelled DF-algebras
is equivalent to amenability in Johnson's sense. We also obtain a topological amenability
criterion for K\"othe co-echelon algebras, completing thereby recent results
of the second author \cite{KP-amenable,KP-contractible}.

The theory of amenable Banach algebras essentially starts
with the famous result of Johnson \cite[Theorem 2.5]{BEJ} who proved that the convolution algebra $L^1(G)$ is amenable if and only if the locally compact group $G$ is amenable. Since then amenable Banach algebras became an inseparable part of functional analysis and operator algebra theory
(see \cite{Runde_new} for a recent and detailed account).
A few years after the publication of Johnson's memoir, Helemskii and Sheinberg \cite{Hel_Shein}
observed that the notion of an amenable algebra perfectly fits into the general
``Banach homological algebra'' developed earlier by Helemskii \cite{Hel_dim}
(and, independently, by Kiehl and Verdier \cite{KV} and by Taylor \cite{JLT}).
Namely, Helemskii and Sheinberg proved that a Banach algebra $A$ is amenable in Johnson's sense
if and only if the unitization of $A$ is a flat Banach $A$-bimodule.
This result was extended by the first author \cite[Corollary 3.5]{AP-weak} to the setting
of Fr\'echet algebras. In the present article we continue this investigation and study amenability
properties of DF-algebras, with a special emphasis on K\"othe co-echelon algebras.

The paper is organized as follows. The next section is Notation and Preliminaries,
and it contains basic definitions, facts and notation that is used in the sequel.
In Section~\ref{sect:flat_amen}, we introduce and study topologically flat locally convex modules and
topologically amenable locally convex algebras. The main results here are Theorem~\ref{ext-top-flat},
which characterizes topologically flat DF-modules in terms of the $\Ext$ functor, and
Theorem~\ref{topam-der}, which shows that, for complete barrelled DF-algebras, the topological amenability
in our sense is equivalent to the Johnson amenability.
In Section~\ref{sect:coechelon}, we characterize topologically amenable K\"othe co-echelon algebras $k_p(V)$ in terms of the corresponding weight sets $V$
(Theorems~\ref{finite-order} and~\ref{thm:kinf}).
Finally, in Section~\ref{sect:examples} we give some concrete examples of topologically
amenable (and non-amenable) co-echelon algebras. In particular, we construct a
topologically amenable co-echelon algebra of order $\infty$ which, in a sense,
cannot be reduced to a direct sum of $\ell_\infty$ with a contractible co-echelon algebra.

General references are: \cite{MV} for functional analysis, \cite{D,M} for Banach and topological
algebra theory, and \cite{H2} for the homology theory of topological algebras.

\section{Notation and Preliminaries}
\label{sect:not_prelim}

We start by recalling some basic definitions and introducing some notation
that will be used in the sequel.
By a \textit{locally convex algebra} we mean a locally convex space (lcs)
over $\CC$ equipped with a separately continuous associative multiplication.
In general, locally convex algebras are not assumed to have an identity.
Given a locally convex algebra $A$, we denote by
$A_+$ the unconditional unitization of $A$, and we denote by $A^{\op}$ the
opposite algebra, i.e., the lcs $A$ with multiplication $a\cdot b:=ba$.
In what follows, when
using the word ``algebra'' with an adjective that describes a linear topological
property (such as ``complete'', ``Fr\'echet'', ``Banach'', etc.), we mean that
the underlying lcs of the algebra in question has the
specified property. The same applies to locally convex modules (see below).

Given a locally convex algebra $A$, a {\em left locally convex $A$-module}
is an lcs $X$ together with a left $A$-module structure such that the action
$A\times X\to X$ is separately continuous.
Right locally convex
modules and locally convex bimodules are defined similarly.
At some point we will be using a concrete locally convex bimodule $A\otimes\CC$ which is the lcs
$A$ itself with trivial right module action and multiplication as the left module action.

The completed projective tensor product of lcs's $E$ and $F$
will be denoted by $E\Ptens F$, and the completion of $E$ will be denoted by
$\wt{E}$ or by $E^\sim$.
A complete locally convex algebra with jointly continuous multiplication
is called a {\em $\Ptens$-algebra}.
If $A$ is a $\Ptens$-algebra then the assignment $a\otimes b\mapsto ab$ gives rise to the so-called \textit{product map} $\pi_A\c A\Ptens A\to A$.
We will simply write $\pi$ whenever it is clear to which algebra the product map is referred to.
If $A$ is a $\Ptens$-algebra, then a left locally convex $A$-module $X$ is a
{\em left $A$-$\Ptens$-module} if $X$ is complete and if the action of $A$ on $X$ is jointly
continuous. Right $\Ptens$-modules and $\Ptens$-bimodules are defined similarly.
The category of left $A$-$\Ptens$-modules (respectively, of right $A$-$\Ptens$-modules,
of $A$-$B$-$\Ptens$-bimodules) will be denoted by $A\lmod$
(respectively, $\rmod A$, $A\bimod B$).
Note that $A$-$\Ptens$-bimodules are nothing but left unital $A^e$-$\Ptens$-modules, where
$A^e:=A_+\Ptens A_+^{\op}$ is the enveloping algebra of $A$
(see \cite[\S II.5.2]{H2}).
If $X\in\rmod A$ and $Y\in A\lmod$,
then the \emph{$A$-module projective tensor product} of $X$ and $Y$ is defined as
\[
X\ptens{A} Y:=(X\Ptens Y/N)^\sim,
\]
where
\[
N:=\overline{\s}\bigl\{x\cdot a\otimes y-x\otimes a\cdot y : a\in A,\, x\in X,\, y\in Y\bigr\}
\subset X\Ptens Y.
\]

If $X$ and $Y$ are two lcs's, then $L(X,Y)$ stands for the vector space
of continuous linear
operators from $X$ to $Y$. We equip $L(X,Y)$ with the topology of uniform convergence
on bounded sets. As usual, we let $X'=L(X,\CC)$. If $A$ is a locally convex algebra and $X,Y$
are left locally convex $A$-modules, then $_AL(X,Y)$ denotes the vector space of continuous linear
$A$-module maps, i.e., operators $T\in L(X,Y)$ satisfying $T(a\cdot x)=a\cdot Tx$ for all $a\in A$,
$x\in X$. In the case of right $A$-modules, resp. $A$-$B$-bimodules, the vector spaces
$L_A(X,Y)$ and $_AL_B(X,Y)$ are defined analogously.

Suppose that $A$, $B$, $C$ are locally convex algebras, $X$ is a locally convex
$B$-$C$-bimodule, and $Y$ is a locally convex $A$-$C$-bimodule. Then
$L_C(X,Y)$ has a natural $A$-$B$-bimodule structure given by
\[
(a\cdot T)(x)=a\cdot T(x),\quad (T\cdot b)(x)=T(b\cdot x)
\qquad (a\in A,\; b\in B,\; T\in L_C(X,Y)).
\]
If the actions of $A$ on $Y$ and of $B$ on $X$ are hypocontinuous with respect to the families
of bounded subsets of $Y$ and $X$, respectively, then $L_C(X,Y)$ is easily seen to be
a locally convex $A$-$B$-bimodule (cf. \cite[Section 3]{JLT}). In particular, this condition is satisfied
provided that the actions are jointly continuous. In particular, for each $\Ptens$-algebra $A$
and each left (respectively, right) $A$-$\Ptens$-module $X$ the dual space $X'$ is a right
(respectively, left) locally convex $A$-module. Note, however, that the action of $A$ on $X'$
need not be jointly continuous.

Let $\CLCS$ denote the category of complete lcs's and continuous linear
maps. Suppose that $\sC\subset\CLCS$ is a full additive subcategory.
We write $\alg(\sC)$ for the category of all $\Ptens$-algebras whose underlying spaces
are objects of $\sC$. If $A$ is a $\Ptens$-algebra, then we denote by $A\lmod(\sC)$
the full subcategory of $A\lmod$ consisting of those modules whose underlying spaces
are objects of $\sC$. The symbols $\rmod A(\sC)$ and $A\bimod B(\sC)$ are understood
in a similar way.

Following \cite{Pir_wdgnucl}
(cf. also \cite{H2}), we say that $\sC$ is {\em admissible} if the following holds:
\begin{itemize}
\item[$(\sC1)$]
if $E\in\sC$ and $F$ is a locally convex space isomorphic to $E$, then $F\in\sC$;
\item[$(\sC2)$]
if $E\in\sC$ and $E_0\subset E$ is a complemented vector subspace, then $E_0\in\sC$;
\item[$(\sC3)$]
if $E,F\in\sC$, then $E\Ptens F\in\sC$.
\end{itemize}

Most of the categories of complete lcs's used in functional analysis are
admissible. In this paper, the concrete admissible subcategories we are mostly interested in are
$\CLCS$ itself, the category $\Ban$ of Banach spaces, the category $\Fr$ of Fr\'echet
spaces, and the category $\CBDF$ of complete barrelled (DF)-spaces.
The admissibility of $\Ban$ and $\Fr$ is well known. As for $\CBDF$, property $(\sC2)$
follows from the fact that the classes of barrelled spaces and of (DF)-spaces are stable
under taking quotients modulo closed subspaces \cite[27.1.(4) and 29.5.(1)]{K1},
while property $(\sC3)$ follows from \cite[41.4.(7) and 41.4.(8)]{K2}.

Let $A$ be a $\Ptens$-algebra, and let $\sC$ be an admissible subcategory of $\CLCS$.
A sequence
\begin{equation}
\label{short_seq}
0 \to X \xra{i} Y \xra{p} Z \to 0
\end{equation}
in $A\lmod(\sC)$ is {\em admissible} if it is split exact in $\CLCS$, i.e., if it has a contracting
homotopy consisting of continuous linear maps. Geometrically, this means that $i$ is a topological
embedding, $p$ is open (i.e., is a quotient map), $i(X)=\ker p$, and $i(X)$ is a complemented
subspace of $Y$.
We say that a morphism $i\colon X\to Y$ (respectively, $p\colon Y\to Z$) in $A\lmod(\sC)$
is an {\em admissible monomorphism} (respectively, an {\em admissible epimorphism})
if it fits into an admissible sequence \eqref{short_seq}.

It is easy to show that $A\lmod(\sC)$ together with the class of all
admissible sequences is an exact category in Quillen's sense \cite{Quillen}.
Therefore most of the main notions and constructions of homological algebra
(projective objects, projective resolutions, derived functors, etc.)
make sense in $A\lmod(\sC)$. For details, we refer to \cite{H2}.
An important property of $A\lmod(\sC)$ is that, if $A\in\alg(\sC)$,
then $A\lmod(\sC)$ has enough projectives. As a consequence,
each covariant functor $F\colon A\lmod(\sC)\to\Vect$
(where $\Vect$ is the category of vector spaces)
has left derived functors
$\dL_n F$, and each contravariant functor $F\colon A\lmod(\sC)\to\Vect$ has right derived
functors $\dR^n F$ ($n\ge 0$). In particular, for each left locally convex $A$-module $Y$
the functor $\Ext^n_A(-,Y)$ is defined to be the $n$th right derived functor of
${_A}L(-,Y)\colon A\lmod(\sC)\to\Vect$. We would like to stress that, in contrast to \cite{H2},
we do not require $Y$ to be an object of $A\lmod(\sC)$. In particular,
we may let $Y=Z'$ for some $Z\in\rmod A(\sC)$. This special case will be essential
in our characterization of topologically flat modules (see Theorem \ref{ext-top-flat}).
In fact, this is the only reason why we have to consider general locally convex modules
rather than $\Ptens$-modules only.

Note that the above facts on $A\lmod(\sC)$ have obvious analogs for $\rmod A(\sC)$
and $A\bimod B(\sC)$. For details, see \cite{H2}.

Let us now recall some facts on strictly exact sequences of locally convex spaces.
Let $\sC$ be an additive category. Following \cite{Schndrs}, we say that a short sequence
\eqref{short_seq}
in $\sC$ is {\em strictly exact} if $i$ is a kernel of $p$ and $p$ is a cokernel of $i$.

\begin{example}
If $\sC=\Vect$, then \eqref{short_seq} is strictly exact iff it is exact in the usual sense.
\end{example}

\begin{example}
\label{example:str_ex_LCS}
If $\sC=\CLCS$, then \eqref{short_seq} is strictly exact iff $i$ is topologically injective,
$i(X)=\ker p$, $p$ is an open map of $Y$ onto $p(Y)$, and $p(Y)$ is dense in $Z$.
This follows from \cite[Proposition 4.1.8]{Prosm_derFA}. Essentially, this means that
$X$ can be identified with a closed subspace of $Y$, and $Z$ is the completion of $Y/X$.
\end{example}

\begin{example}
\label{example:str_ex_Fr}
If $\sC=\Fr$ or $\sC=\Ban$, then \eqref{short_seq} is strictly exact in $\sC$
iff it is strictly exact in $\CLCS$ iff
it is exact (or, equivalently, strictly exact) in $\Vect$. This is essentially a combination of Example~\ref{example:str_ex_LCS}
with the Open Mapping Theorem. See also \cite[Chapter 2]{Wengen}.
\end{example}

The following result is a special case of V.~P.~Palamodov's theorem \cite[Proposition 4.2]{VPP}
(see also \cite[Theorem 2.2.2]{Wengen}). Given a set $S$, let $\ell_\infty(S)$ denote the
Banach space of bounded $\CC$-valued functions on $S$.

\begin{theorem}[Palamodov]
\label{top-exact}
A short sequence \eqref{short_seq} in $\CLCS$ is strictly exact if and only if, for each set $S$,
the sequence
\[
0 \to L(Z,\ell_\infty(S))\to L(Y,\ell_\infty(S)) \to L(X,\ell_\infty(S)) \to 0
\]
is exact in $\Vect$.
\end{theorem}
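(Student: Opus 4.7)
The plan is to prove the two directions separately, viewing $L(-,\ell_\infty(S))$ as a family of test functors that detects strict exactness in $\CLCS$. For the forward direction I would first invoke Example~\ref{example:str_ex_LCS} to identify $X$ with a closed subspace of $Y$ via $i$ and $Z$ with the completion of $Y/i(X)$ via $p$. Injectivity of $L(Z,\ell_\infty(S))\to L(Y,\ell_\infty(S))$ is immediate from density of $p(Y)$ in $Z$ together with Hausdorffness of $\ell_\infty(S)$; exactness at the middle amounts to showing that a continuous $T\colon Y\to\ell_\infty(S)$ with $T\circ i=0$ factors through the open quotient $Y\to Y/i(X)=p(Y)$ and extends by uniform continuity to the completion $Z$. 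The main nontrivial point is surjectivity of $L(Y,\ell_\infty(S))\to L(X,\ell_\infty(S))$: I would write $T\in L(X,\ell_\infty(S))$ coordinatewise as an equicontinuous family $(T_s)_{s\in S}\subset X'$ dominated by a single continuous seminorm $q$ on $X$, transport $q$ to $i(X)\subset Y$ and majorize it by a continuous seminorm $r$ on $Y$ (possible since $i$ is a topological embedding), and apply Hahn--Banach to each $T_s$ to produce $\tilde T_s\in Y'$ with $|\tilde T_s|\le r$. The family $(\tilde T_s)_{s\in S}$ is then equicontinuous and assembles into the required $\tilde T\colon Y\to\ell_\infty(S)$.

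For the backward direction I would verify each of the four conditions in Example~\ref{example:str_ex_LCS}. Taking $S$ a singleton, the hypothesis becomes exactness of $0\to Z'\to Y'\to X'\to 0$. Density of $p(Y)$ in $Z$ follows from injectivity of $Z'\to Y'$ and a Hahn--Banach separation argument, while $p\circ i=0$ drops out of the middle exactness since $\phi\circ p\circ i=0$ for every $\phi\in Z'$. Topological injectivity of $i$ is obtained as follows: for each continuous seminorm $q$ on $X$, set $X_q=X/\ker q$ and let $S_q$ be the closed unit ball of $(X_q)'$; the canonical isometric embedding $X_q\hookrightarrow\ell_\infty(S_q)$ yields a continuous $T\colon X\to\ell_\infty(S_q)$ with $\|Tx\|_\infty=q(x)$, surjectivity of $L(Y,\ell_\infty(S_q))\to L(X,\ell_\infty(S_q))$ produces an extension $\tilde T\colon Y\to\ell_\infty(S_q)$, and $y\mapsto\|\tilde Ty\|_\infty$ is then a continuous seminorm on $Y$ restricting to $q$ on $i(X)$. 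Completeness of $X$ combined with $i$ being a topological embedding forces $i(X)$ to be closed in $Y$, so the equality $i(X)=\ker p$ reduces to the inclusion $\ker p\subset\overline{i(X)}$, which comes from a standard Hahn--Banach argument applied to the middle exactness with $S$ a singleton. Finally, openness of $p$ onto $p(Y)$ follows because any continuous seminorm on $Y$ descending to $Y/i(X)$ can be realized, by the same $\ell_\infty(S)$ trick applied to $Y$, as the pullback of $\|\cdot\|_\infty$ under some $T\in L(Y,\ell_\infty(S))$ with $T\circ i=0$; by exactness $T=\phi\circ p$ with $\phi\in L(Z,\ell_\infty(S))$, giving the desired continuous seminorm $\|\phi(\cdot)\|_\infty$ on $Z$.

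The main obstacle I anticipate is the passage from $\overline{i(X)}=\ker p$, which the Hom-testing yields directly, to the equality $i(X)=\ker p$: one is forced to use completeness of $X$ (together with topological injectivity of $i$) to ensure that $i(X)$ is already closed in $Y$. A secondary technical point is arranging in the surjectivity step of the forward direction that the Hahn--Banach extensions $(\tilde T_s)$ be jointly equicontinuous, which requires first dominating all $T_s$ by a single continuous seminorm on $X$ and then extending that seminorm (not the individual functionals) to $Y$ before invoking Hahn--Banach.
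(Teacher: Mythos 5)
Your proposal is correct and complete. Note, however, that the paper does not prove this statement at all: it records it as a known special case of Palamodov's theorem, citing \cite[Proposition 4.2]{VPP} and \cite[Theorem 2.2.2]{Wengen}. So there is no in-paper proof to match; what you have written is essentially the standard argument from those references, namely that the spaces $\ell_\infty(S)$ form a family of injective objects that is rich enough to detect strict exactness. All the key points are in place: in the forward direction, the Hahn--Banach extension of an equicontinuous family $(T_s)$ dominated by a single seminorm $r$ on $Y$ (and you are right that one must extend the seminorm first, not the individual functionals, to keep the extensions jointly equicontinuous); in the backward direction, the isometric embedding $X_q\hookrightarrow\ell_\infty(S_q)$ to recover topological injectivity of $i$, the use of completeness of $X$ to upgrade $\ker p\subset\overline{i(X)}$ to $\ker p=i(X)$, and the realization of any continuous seminorm on $Y$ vanishing on $\ker p$ as $\|T(\cdot)\|_\infty$ with $T\circ i=0$ to get openness of $p$ onto $p(Y)$ via the middle exactness. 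The only step you leave implicit is the plain injectivity of $i$, but that falls out of your own construction (if $i(x)=0$ then $q(x)=\|\tilde T(i(x))\|_\infty=0$ for every continuous seminorm $q$ on $X$), so I would not count it as a gap.
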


We end this section with a definition and a collection of basic facts concerning K\"othe co-echelon spaces and algebras. Let $I$ be a countable set, and let
$V:=(v_n)_{n\in\N}$ be a sequence of weights $v_n\c I\to(0,\infty]$ such that
\begin{gather}
\forall\,i\in I\quad\exists\,n\in\N\quad v_n(i)<\infty,\tag{W1}\\
\forall\,n\in\N\quad\forall i\in I\quad v_{n+1}(i)\le v_n(i).\tag{W2}
\end{gather}
For $1\le p\le\infty$ or $p=0$
we define the \textit{K\"othe co-echelon space of order $p$} as
\begin{gather*}
k_p(I,V):=\Big\{x=(x_i)\in\CC^I : \sum_{i\in I}|x_i|^pv_n(i)^p<\infty\text{\,\,for some\,\,}n\in\N\Big\}
\quad (1\le p<\infty),\\
k_{\infty}(I,V):=\Big\{x=(x_i)\in\CC^I : \sup_{i\in I}|x_i|v_n(i)<\infty\text{\,\,for some\,\,}n\in\N\Big\},\\
k_0(I,V):=\Big\{x=(x_i)\in\CC^I : \lim_{i\to\infty}|x_i|v_n(i)=0\text{\,\,for some\,\,}n\in\N\Big\}.
\end{gather*}
We often write $k_p(V)$ for $k_p(I,V)$ when the index set $I$ is clear from the context.
In most examples we actually have $I=\N$ (see Examples \ref{example:phi}--\ref{example:germs}),
but sometimes it is more convenient to let $I=\N\times\N$ (see Example~\ref{example:NN}).

The above definition is a bit unusual since we allow $v_n(i)=\infty$ for some $n\in\N$ and $i\in I$.
However, this less restrictive approach does not affect our proofs and allows us to consider in particular the space $\phi:=\CC^{(\N)}$ of finitely supported sequences
(see Example \ref{example:phi} below).
The space $k_p(I,V)$ is canonically endowed with the inductive limit topology of the system $(\ell_p(I,v_n))_{n\in\N}$ (for $p\ge 1$) or $(c_0(I,v_n))_{n\in\N}$ (for $p=0$),
where $\ell_p(I,v_n)$ and $c_0(I,v_n)$
are the weighted Banach spaces of scalar sequences equipped with their canonical norms. Clearly, if $v_n(i)=\infty$, then $x\in\ell_p(I,v_n)$ implies that $x_i=0$.
Thus we usually write
\[k_p(I,V)=\ind_n\ell_p(I,v_n)\hspace{10pt}(1\le p\le\infty),\qquad k_0(I,V)=\ind_nc_0(I,v_n).\]
Since K\"othe co-echelon spaces are countable inductive limits of Banach spaces,
they are barrelled DF-spaces (see \cite[12.4, Theorem 8]{J}).
By \cite[Theorem 2.3]{BMS}, $k_p(V)$ is complete for all $1\le p\le \infty$.
On the other hand, $k_0(V)$ is not always complete, see \cite[\S31.6]{K1}
or \cite[Theorem 3.7 and Examples 3.11, 4.11.2, 4.11.3]{BMS}.

In many concrete cases (see examples below), K\"othe co-echelon spaces are algebras
with respect to the coordinatewise multiplication of sequences.
A systematic study of such algebras was initiated in \cite{BonDom}.
Recall from \cite[Proposition 2.1]{BonDom} that $k_p(V)$ is an algebra if and only if
\begin{equation}
\forall\,n\in\N\quad\exists\,m\in\N\quad v_m/v_n^2\in\ell_{\infty}\tag{W3}
\end{equation}
(we let $\infty/\infty=1$ for convenience).
Moreover, if (W3) holds, then the multiplication on $k_p(V)$
is automatically jointly continuous [loc. cit.]. From now on, when we write something like
``let $k_p(V)$ be a {\em K\"othe co-echelon algebra}'', we tacitly assume that $V$ is a sequence of weights
satisfying conditions (W1)--(W3),
and that $k_p(V)$ is considered as a locally convex algebra under the coordinatewise
multiplication.

\begin{example}
\label{example:phi}
For each $n\in\N$, define $v_n\colon\N\to (0,\infty]$ by $v_n(j)=1$ for $j\le n$,
and $v_n(j)=\infty$ for $j>n$. Conditions (W1)--(W3) are clearly satisfied,
and $k_p(V)$ is nothing but the algebra $\varphi$ of finite sequences equipped with
the strongest locally convex topology.
\end{example}

\begin{example}
\label{example:dual_power}
Let $R\in [0,+\infty)$, and let $\alpha=(\alpha_i)_{i\in\N}$ be a sequence
of positive numbers increasing to infinity. Consider
the {\em dual power series spaces}\footnote[1]{By \cite[Theorem 2.7]{BMS},
$D\Lambda_R^p(\alpha)$ is topologically isomorphic to the strong dual of the
power series space $\Lambda_{1/R}^q(\alpha)$, where $1/p+1/q=1$.}
\begin{align*}
D\Lambda_R^p(\alpha)&=\Bigl\{ x=(x_j)\in\CC^\N : \sum_j |x_j|^p r^{\alpha_j p}<\infty
\;\text{for some}\; r>R\Bigr\} \quad (1\le p<\infty);\\
D\Lambda_R^\infty(\alpha)&=\Bigl\{ x=(x_j)\in\CC^\N : \sup_j |x_j| r^{\alpha_j}<\infty
\;\text{for some}\; r>R\Bigr\}.
\end{align*}
If $(r_n)$ is a fixed sequence of positive numbers strictly decreasing to $R$, then
we clearly have $D\Lambda_R^p(\alpha)=k_p(V)$, where $v_n(j)=r_n^{\alpha_j}$
for all $n,j\in\N$. We could also consider the space $D\Lambda_R^0(\alpha)=k_0(V)$
with $V$ as above, but the condition that $\alpha_j\to\infty$ easily implies that
$D\Lambda_R^0(\alpha)=D\Lambda_R^\infty(\alpha)$.

An elementary computation shows that $D\Lambda_R^p(\alpha)$ satisfies (W3)
if and only if for each $r>R$ there exists $\rho>R$ such that $\rho\le r^2$.
Equivalently, this means that if $r>R$, then $r^2>R$.
If $R\ge 1$ or $R=0$, then this condition is clearly satisfied,
so $D\Lambda_R^p(\alpha)$ is a K\"othe co-echelon algebra in this case.
If $0<R<1$, then the above condition fails (take any $r\in (R,\sqrt{R}]$).
\end{example}

\begin{example}
\label{example:s'}
Letting $\alpha_j=\log j$ in Example~\ref{example:dual_power}, we see that
$D\Lambda_0^p(\alpha)$ is nothing but the algebra $s'$ of sequences of polynomial
growth.
\end{example}

\begin{example}
\label{example:germs}
If $\alpha_j=j$, then $D\Lambda_R^p(\alpha)$ is topologically isomorphic to the space
of germs of holomorphic functions on the closed disc
$\ol{\DD}_R=\{ z\in\CC : |z| \le R\}$. If $R\ge 1$ or $R=0$, then
the multiplication on $D\Lambda_R^p(\alpha)$
corresponds to the ``componentwise'' multiplication of the Taylor expansions
of holomorphic functions (the {\em Hadamard multiplication}, cf. \cite{ReSa}).
The resulting locally convex algebra will be denoted by $\cH(\ol{\DD}_R)$.
\end{example}

Given $p\in [1,\infty]$ and a sequence $V=(v_n)$ of weights satisfying (W1)--(W3), we say that
$V$ is {\em eventually in $\ell_p$} if $v_n\in\ell_p(I)$ for some $n\in\N$.
Because of (W2), this means precisely that there exists $n\in\N$ such that
$v_k\in\ell_p(I)$ for all $k\ge n$. If $V$ is eventually in $\ell_\infty$, then we say that
$V$ is {\em eventually bounded}. By \cite[Proposition 2.5]{BonDom},
if $1\le p<\infty$, then
\[
\begin{split}
\text{$V$ is eventually in $\ell_p$}
&\iff \text{$V$ is eventually in $\ell_1$}
\iff \text{$k_p(V)$ is unital}\\
&\iff \text{$V$ is eventually bounded, and $k_p(V)$ is nuclear.}
\end{split}
\]
In fact, if the above conditions are satisfied, then we have $k_p(V)=k_q(V)$
for all $p,q\in [1,\infty]\cup\{ 0\}$ (see \cite[Proposition 15]{Bier}).

A comprehensive study of K\"othe co-echelon spaces may be found in \cite{BMS}.
K\"{o}the co-echelon algebras appear as a main object of investigation in \cite{BonDom}
and \cite{KP-amenable,KP-contractible}.

\section{Topological Flatness and Topological Amenability}
\label{sect:flat_amen}

Let $\sC$ be an admissible subcategory of $\CLCS$, and let $A\in\alg(\sC)$.

\begin{definition}
\label{def:topflat}
We say that a module $X\in A\lmod(\sC)$ is {\em topologically flat}
(relative to $\sC$)
if for each short admissible sequence
\begin{equation}
\label{shortadm}
0 \to Y_1\to Y_2 \to Y_3 \to 0
\end{equation}
in $\rmod A(\sC)$ the sequence
\begin{equation}
\label{short_tens}
0 \to Y_1\ptens{A} X \to Y_2\ptens{A} X \to Y_3\ptens{A} X \to 0
\end{equation}
is strictly exact in $\CLCS$.
A right module in $\rmod A(\sC)$ (respectively, a bimodule in
$A\bimod A(\sC)$) is topologically flat if it is topologically flat
as a left module over $A^{\op}$ (respectively, over $A^e$).
\end{definition}

\begin{rem}
\label{flat-top-flat}
According to \cite{H2}, a module $X\in A\lmod(\sC)$ is {\em flat}
(relative to $\sC$)
if for each short admissible sequence \eqref{shortadm}
in $\rmod A(\sC)$ the sequence \eqref{short_tens}
is exact in $\Vect$.
If $\sC\subset\Fr$, then flatness and topological flatness are equivalent
(see Example~\ref{example:str_ex_Fr}).
We conjecture that, in the general case, neither topological flatness implies flatness, nor vice versa. However, we do not have concrete counterexamples at the moment.
\end{rem}

\begin{example}
\label{example:proj_topflat}
Each projective module $P\in A\lmod(\sC)$ is topologically flat.
Indeed, if $P$ is free, i.e., if $P$ is isomorphic to $A_+\Ptens E$ for some $E\in\sC$,
then \eqref{short_tens} is isomorphic to the sequence
\[
0 \to Y_1\Ptens E \to Y_2\Ptens E \to Y_3\Ptens E \to 0,
\]
which is split exact and is {\em a fortiori} strictly exact in $\CLCS$.
Since each projective module is a retract of a free module \cite[III.1.27]{H2}, the result follows.
\end{example}

\begin{prop}
\label{prop:flat_topinj}
A module $X\in A\lmod(\sC)$ is topologically flat if and only if
for each admissible monomorphism $Y\to Z$ in $\rmod A(\sC)$
the induced map $Y\ptens{A} X\to Z\ptens{A} X$ is topologically
injective.
\end{prop}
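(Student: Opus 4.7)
The plan is to handle the two implications separately, with the converse reduced to the right-exactness of the module projective tensor product. The forward direction is immediate from Definition~\ref{def:topflat}: given an admissible monomorphism $i\colon Y\to Z$ in $\rmod A(\sC)$, I would extend it to an admissible sequence $0\to Y\xra{i} Z\xra{p} W\to 0$; topological flatness of $X$ then yields strict exactness of the tensored sequence in $\CLCS$, and by Example~\ref{example:str_ex_LCS} this forces $i\ptens{A}\id_X$ to be topologically injective.

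For the converse, I would fix an admissible sequence $0\to Y_1\xra{i} Y_2\xra{p} Y_3\to 0$ in $\rmod A(\sC)$ and consider the induced sequence
\[
0\to Y_1\ptens{A}X\xra{i\ptens{A}\id_X} Y_2\ptens{A}X\xra{p\ptens{A}\id_X} Y_3\ptens{A}X\to 0.
\]
The hypothesis yields topological injectivity of $i\ptens{A}\id_X$, and by the criterion recorded in Example~\ref{example:str_ex_LCS} the remaining conditions for strict exactness in $\CLCS$ are: (a) $\ker(p\ptens{A}\id_X)=\im(i\ptens{A}\id_X)$; (b) $p\ptens{A}\id_X$ is open onto its image; and (c) this image is dense in $Y_3\ptens{A}X$.

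The key input for (a)--(c) is the right-exactness of $-\ptens{A}X$ (cf.\ \cite{H2}): applied to an admissible sequence, this functor produces a cokernel diagram in $\CLCS$. Concretely, one obtains
\[
Y_3\ptens{A}X\cong\bigl((Y_2\ptens{A}X)/\ol{\im(i\ptens{A}\id_X)}\bigr)^{\sim}
\]
with $p\ptens{A}\id_X$ realized as the canonical quotient-and-completion map, which immediately gives (b), (c) and $\ker(p\ptens{A}\id_X)=\ol{\im(i\ptens{A}\id_X)}$. To upgrade the latter to (a), I would observe that $Y_1\ptens{A}X$ is complete and that, by topological injectivity, its image in $Y_2\ptens{A}X$ is a complete, hence closed, subspace; thus the closure is redundant and (a) follows.

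The main obstacle I expect is justifying the cokernel-preservation step in the $\CLCS$ setting, which is subtle because cokernels there involve both quotients and completions, unlike the Fr\'echet/Banach case where the Open Mapping Theorem makes everything automatic. If the reference in \cite{H2} does not cover this exact form, I would argue directly: apply $-\Ptens X$ to the admissible sequence (which is split exact in $\CLCS$), and then analyze the closed $A$-balancing subspaces $N_j\subset Y_j\Ptens X$ and the way they interact with this splitting, in order to verify the universal property of $Y_3\ptens{A}X$ as a cokernel of $i\ptens{A}\id_X$ in $\CLCS$.
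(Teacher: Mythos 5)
Your proposal is correct and follows essentially the same route as the paper: the forward direction is read off from Definition~\ref{def:topflat}, and the converse rests on the fact that $(-)\ptens{A}X\colon\rmod A\to\CLCS$ preserves cokernels, which the paper cites as \cite[Proposition 3.3]{AP} (so the ``obstacle'' you anticipate is already covered by that reference rather than by \cite{H2}). Your extra observation that topological injectivity makes $\im(i\ptens{A}\id_X)$ complete, hence closed, so that the closure in the cokernel description is redundant, is exactly the detail the paper leaves implicit in the word ``immediate''.
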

\begin{proof}
This is immediate from Definition~\ref{def:topflat} and from the fact
that the functor $(-)\ptens{A} X\colon\rmod A\to\CLCS$ preserves cokernels
\cite[Proposition 3.3]{AP}.
\end{proof}

\begin{rem}
For $\sC=\Ban$, Proposition~\ref{prop:flat_topinj} is well known
(cf. \cite[Theorem VII.1.42]{H3}). For $\sC=\Fr$, this fact was observed
in \cite{AP-weak}.
\end{rem}

The following ``adjoint associativity'' (or ``exponential law'') for
locally convex spaces is a kind of folklore. Since we have not found an exact reference,
we give a proof here for the convenience of the reader.

\begin{prop}
\label{prop:adjass_lcs}
Let $X$, $Y$, $Z$ be locally convex spaces. Suppose that $Z$ is complete.
There is a natural linear map
\begin{equation}
\label{adjass}
L(X\Ptens Y,Z)\to L(X,L(Y,Z)), \qquad
f\mapsto (x\mapsto (y\mapsto f(x\otimes y))).
\end{equation}
The above map is a vector space isomorphism in either of the following cases:
\begin{mycompactenum}
\item
$X$ and $Y$ are Fr\'echet spaces;
\item
$X$ and $Y$ are DF-spaces, and $Y$ is barrelled.
\end{mycompactenum}
\end{prop}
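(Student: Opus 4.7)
The plan is to establish well-definedness and injectivity of $\Phi$ in full generality, and then to reduce surjectivity to a separate-to-joint continuity statement handled case-by-case.

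First I would check that $\Phi$ maps $L(X \Ptens Y, Z)$ into $L(X, L(Y, Z))$. For each $x \in X$, the map $\hat{f}(x)\c y \mapsto f(x \otimes y)$ is the composition of $f$ with the continuous linear map $y \mapsto x \otimes y$, so it lies in $L(Y, Z)$. Continuity of $x \mapsto \hat{f}(x)$ with respect to the topology of bounded convergence on $L(Y, Z)$ is immediate from the definition of the projective tensor seminorms: given a bounded $M \subset Y$ and a $0$-neighborhood $U$ in $Z$, a product-seminorm estimate $p(x) q(y) < 1 \Rightarrow f(x \otimes y) \in U$ combined with boundedness of $q$ on $M$ produces a $0$-neighborhood $V \subset X$ with $\hat{f}(V)(M) \subset U$. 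Injectivity of $\Phi$ then follows because $X \otimes Y$ is dense in $X \Ptens Y$, so $\hat{f} = 0$ forces $f = 0$.

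For surjectivity, given $g \in L(X, L(Y, Z))$, I would form the bilinear map $B \c X \times Y \to Z$, $B(x, y) := g(x)(y)$. Separate continuity is automatic: $B(x, \cdot) = g(x) \in L(Y, Z)$, while $B(\cdot, y)$ is the composition of $g$ with evaluation at the bounded singleton $\{y\}$, which is continuous on $L(Y, Z)$. Once $B$ is shown to be jointly continuous, the universal property of the projective tensor product together with the completeness of $Z$ produces a unique $f \in L(X \Ptens Y, Z)$ with $f(x \otimes y) = B(x, y)$, and then $\Phi(f) = g$ is forced.

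The heart of the matter is therefore the promotion of separate continuity of $B$ to joint continuity under each of the two hypotheses, and this is where the main obstacle lies. In case (i), $X$ is Baire and $Y$ is metrizable (both being Fr\'echet), so the classical theorem on separately continuous bilinear maps on a product of a Baire space and a metrizable space applies directly. In case (ii), I would invoke the DF-analogue: a separately continuous bilinear map $X \times Y \to Z$ with $X$ a DF-space and $Y$ a barrelled DF-space is jointly continuous. This goes back to Grothendieck and can be extracted from \cite[\S40.2 and \S41.4]{K2}. The asymmetric hypothesis --- barrelledness required only of $Y$ --- is essential; its use exploits both the countable fundamental sequence of bounded sets in the DF-space $X$ and the fact that barrels in $Y$ are $0$-neighborhoods.
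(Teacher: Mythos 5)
Your general setup (well-definedness, injectivity via density of $X\otimes Y$, and the reduction of surjectivity to promoting a bilinear map $B(x,y)=g(x)(y)$ from separate to joint continuity) is sound, and case (i) is handled correctly. The problem is case (ii). Your reduction keeps only the \emph{separate} continuity of $B$: you use continuity of $g$ into $L(Y,Z)$ merely through evaluation at singletons, i.e.\ you only use that $g$ is continuous into $L(Y,Z)$ with the topology of \emph{pointwise} convergence. But the hypothesis is that $g$ is continuous into $L(Y,Z)$ with the topology of uniform convergence on \emph{bounded} sets, and this is exactly the extra information one cannot afford to discard: it says that $B$ is $\cB_Y$-hypocontinuous (bounded sets of $X$... more precisely, $g(U)$ equicontinuous-type estimates uniform over bounded $M\subset Y$). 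The argument that actually works is: barrelledness of $Y$ upgrades separate continuity to $\cB_X$-hypocontinuity (Banach--Steinhaus, \cite[40.2.(3)]{K2}), the continuity of $g$ into $L_b(Y,Z)$ supplies $\cB_Y$-hypocontinuity, and then Grothendieck's theorem \cite[40.2.(10)]{K2} — which requires hypocontinuity with respect to the bounded sets of \emph{both} DF-factors — yields joint continuity.

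The lemma you invoke instead, namely that a merely separately continuous bilinear map $X\times Y\to Z$ with $X$ a DF-space and $Y$ a barrelled DF-space is jointly continuous, is not the standard Grothendieck/K\"othe result and is not justified by the references you point to: the separate-continuity-implies-joint-continuity statements for DF-spaces in \cite[\S40.2]{K2} require barrelledness (or quasi-barrelledness) of \emph{both} factors, precisely because barrelledness of one factor only delivers hypocontinuity with respect to the bounded sets of the \emph{other} factor, and the DF-machinery (countable fundamental sequences of bounded sets plus the countable neighborhood property) needs hypocontinuity on both sides. If your asymmetric lemma were true, the proposition would in fact identify $L(X\Ptens Y,Z)$ with the space of \emph{all} separately continuous bilinear maps, i.e.\ with $L(X,L_\sigma(Y,Z))$ — a strictly stronger claim that the stated hypotheses do not support. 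So you need to reinstate the $\cB_Y$-hypocontinuity of $B$ (which is free from the topology on $L(Y,Z)$) before applying the DF-space joint continuity theorem; with that repair your argument becomes essentially the paper's proof.
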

\begin{proof}
By the universal property of the projective tensor product
(see, e.g., \cite[41.3.(1)]{K2}),
$L(X\Ptens Y,Z)$ is naturally identified with the space of jointly
continuous bilinear maps from $X\times Y$ to $Z$.
On the other hand, each $\varphi\in L(X,L(Y,Z))$ determines
a separately continuous bilinear map $\varPhi\colon X\times Y\to Z$ via
$\varPhi(x,y)=\varphi(x)(y)$ ($x\in X$, $y\in Y$).
Moreover, the rule $\varphi\mapsto\varPhi$ determines a vector space
isomorphism between $L(X,L(Y,Z))$ and the space of those separately
continuous bilinear maps $X\times Y\to Z$ which are $\cB_Y$-hypocontinuous,
where $\cB_Y$ is the family of all bounded subsets of $Y$
\cite[40.1.(3)]{K2}.
This implies that \eqref{adjass} indeed takes $L(X\Ptens Y,Z)$
to $L(X,L(Y,Z))$, is always injective, and is surjective if and only if
each separately continuous, $\cB_Y$-hypocontinuous bilinear map
from $X\times Y$ to $Z$ is jointly continuous. In case (i), this condition
is clearly satisfied because the separate continuity and the joint continuity
are equivalent for maps $X\times Y\to Z$
(see, e.g., \cite[40.2.(1)]{K2}).
Assume now that (ii) holds, and let $\varPhi\colon X\times Y\to Z$
be a separately continuous, $\cB_Y$-hypocontinuous bilinear map.
Since $Y$ is barrelled, $\varPhi$ is also $\cB_X$-hypocontinuous
by \cite[40.2.(3)]{K2}. Finally, since $X$ and $Y$ are DF-spaces,
and since $\varPhi$ is $(\cB_X,\cB_Y)$-hypocontinuous, we conclude that
$\varPhi$ is jointly continuous \cite[40.2.(10)]{K2}.
In view of the above remarks, this completes the proof.
\end{proof}

\begin{corollary}
\label{hc-jc}
Let $X,Y$ be either Fr\'echet spaces or barrelled DF-spaces. Then there exist natural vector space isomorphisms
\[
(X\Ptens Y)'\cong L(X,Y') \cong L(Y,X').
\]
\end{corollary}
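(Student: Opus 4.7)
The plan is to deduce both isomorphisms from Proposition \ref{prop:adjass_lcs} by specializing one of the three spaces.

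First I would set $Z=\CC$ in Proposition \ref{prop:adjass_lcs}. Since $\CC$ is complete, and since the hypothesis on $(X,Y)$ (either both Fr\'echet, or both barrelled DF-spaces) places us in case (i) or (ii) of that proposition, we obtain the first isomorphism
\[
(X\Ptens Y)' = L(X\Ptens Y,\CC)\cong L(X,L(Y,\CC))=L(X,Y').
\]
This uses that $L(Y,\CC)=Y'$ by definition.

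For the second isomorphism, I would use the symmetry of the projective tensor product, $X\Ptens Y\cong Y\Ptens X$, which is a standard natural isomorphism (see, e.g., \cite[41.2]{K2}). In the Fr\'echet case this swap keeps us in case (i); in the barrelled DF case, both $X$ and $Y$ are barrelled by hypothesis, so we remain in case (ii) after swapping. Applying the first isomorphism to the pair $(Y,X)$ then gives
\[
(X\Ptens Y)'\cong (Y\Ptens X)'\cong L(Y,X'),
\]
and composing with the first isomorphism yields $L(X,Y')\cong L(Y,X')$.

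There is essentially no obstacle beyond verifying that the hypotheses of Proposition \ref{prop:adjass_lcs} are preserved under the swap $X\leftrightarrow Y$, which is transparent because the hypothesis in the Corollary is symmetric in $X$ and $Y$. The naturality of both isomorphisms in $X$ and $Y$ is inherited from the naturality of the map \eqref{adjass} and of the commutativity constraint on $\Ptens$.
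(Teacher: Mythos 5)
Your proof is correct and is exactly the intended derivation: the paper states Corollary \ref{hc-jc} without proof as an immediate consequence of Proposition \ref{prop:adjass_lcs}, obtained precisely by taking $Z=\CC$ and using the symmetry of $\Ptens$. Your check that the swap $X\leftrightarrow Y$ preserves hypothesis (ii) (because both spaces are assumed barrelled) is the right point to verify, since Proposition \ref{prop:adjass_lcs}(ii) only requires the second factor to be barrelled.
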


The following is a natural extension of \cite[II.5.22]{H2} to the
locally convex setting.

\begin{prop}
\label{prop:adjass_mod}
Let $A$, $B$, $C$ be $\Ptens$-algebras, and let $X\in A\bimod B$,
$Y\in B\bimod C$, and $Z\in A\bimod C$. There is a natural linear map
\begin{equation}
\label{adjass_mod}
{_A}L_C(X\ptens{B} Y,Z)\to {_A}L_B(X,L_C(Y,Z)), \qquad
f\mapsto (x\mapsto (y\mapsto f(x\otimes y))).
\end{equation}
The above map is a vector space isomorphism if either of the conditions
{\upshape (i), (ii)} of Proposition~{\upshape\ref{prop:adjass_lcs}} are satisfied.
\end{prop}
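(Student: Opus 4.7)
The plan is to reduce the statement to the underlying lcs adjunction of Proposition~\ref{prop:adjass_lcs}. Write $\tau\colon X\Ptens Y\to X\ptens{B} Y$ for the canonical map. Precomposition with $\tau$ identifies ${_A}L_C(X\ptens{B} Y,Z)$ with the subspace of $L(X\Ptens Y,Z)$ consisting of those continuous linear maps $\tilde f$ that are $A$-linear in the first variable, $C$-linear in the second, and $B$-balanced in the sense that $\tilde f(xb\otimes y)=\tilde f(x\otimes by)$. That any such $\tilde f$ descends uniquely to $X\ptens{B} Y$ uses the completeness of $Z$: it first factors continuously through the quotient $(X\Ptens Y)/N$ and then extends to its completion, which by definition is $X\ptens{B} Y$. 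On the other side, ${_A}L_B(X,L_C(Y,Z))$ sits naturally inside $L(X,L(Y,Z))$.

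Next I would check that the lcs-level adjunction \eqref{adjass} of Proposition~\ref{prop:adjass_lcs} carries one of these subspaces bijectively onto the other. Writing $\phi(x)(y):=\tilde f(x\otimes y)$, the translations are purely formal: $C$-linearity of $\tilde f$ in the second argument is the same as $\phi(x)\in L_C(Y,Z)$ for every $x$; $A$-linearity of $\tilde f$ in the first argument is left $A$-linearity of $\phi$; and $B$-balancedness of $\tilde f$ corresponds to right $B$-linearity of $\phi$ with respect to the bimodule structure on $L_C(Y,Z)$ recalled in Section~\ref{sect:not_prelim}, via the computation $\phi(xb)(y)=\tilde f(xb\otimes y)=\tilde f(x\otimes by)=\phi(x)(by)=(\phi(x)\cdot b)(y)$. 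Since these translations go in both directions, one obtains mutually inverse maps between the relevant subspaces.

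The only nontrivial input is the step of passing between jointly continuous bilinear maps $X\times Y\to Z$ (equivalent to elements of $L(X\Ptens Y,Z)$) and continuous maps $X\to L(Y,Z)$; in the general locally convex setting this correspondence is only injective, the image consisting of those $\phi$ whose associated bilinear form is jointly continuous rather than merely separately continuous and hypocontinuous. This is precisely the obstacle that Proposition~\ref{prop:adjass_lcs} resolves under either hypothesis (i) or (ii) --- Fr\'echet spaces or DF-spaces with $Y$ barrelled --- so once that proposition is invoked, the bimodule adjunction \eqref{adjass_mod} follows from the formal translations described above.
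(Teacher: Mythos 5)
Your argument is correct and follows essentially the same route as the paper: the paper invokes the universal property of $\ptens{B}$ (citing \cite[II.4.2]{H2}) to identify ${_A}L_C(X\ptens{B}Y,Z)$ with the $B$-balanced, $A$- and $C$-linear jointly continuous bilinear maps, records the same ``routine calculation'' translating these conditions across the adjunction, and then quotes Proposition~\ref{prop:adjass_lcs}. You merely spell out the universal property by hand (factoring through the quotient and extending to the completion using completeness of $Z$) instead of citing it, which is a harmless difference in presentation.
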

\begin{proof}
By the universal property of $\ptens{B}$ \cite[II.4.2]{H2},
${_A}L_C(X\ptens{B} Y,Z)$ is naturally identified with the space of those jointly
continuous bilinear maps from $X\times Y$ to $Z$ which are
(1) $B$-balanced, (2) $A$-linear in the first variable, and (3) $C$-linear in the
second variable. A routine calculation shows that a jointly continuous
bilinear map $X\times Y\to Z$ has the above three properties
if and only if the respective linear map $X\to L(Y,Z)$
takes $X$ to $L_C(Y,Z)$ and is an $A$-$B$-bimodule morphism.
The rest follows from Proposition~\ref{prop:adjass_lcs}.
\end{proof}

\begin{corollary}
\label{hc-jc2}
Let $B$ be a $\Ptens$-algebra, $X\in\rmod B$, and $Y\in B\lmod$.
If $X$ and $Y$ are either Fr\'echet spaces or barrelled DF-spaces,
then there exist natural vector space isomorphisms
\[
(X\ptens{B} Y)'\cong L_B(X,Y')\cong {_B}L(Y,X').
\]
\end{corollary}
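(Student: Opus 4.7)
The plan is to obtain both isomorphisms as special cases of Proposition~\ref{prop:adjass_mod}, the first one directly and the second one after swapping to the opposite algebra.

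For the isomorphism $(X\ptens{B} Y)'\cong L_B(X,Y')$, I apply Proposition~\ref{prop:adjass_mod} with $A=C=\CC$ and $Z=\CC$. Then $X\in\CC\bimod B$ is simply $X\in\rmod B$, $Y\in B\bimod\CC$ is simply $Y\in B\lmod$, and $Z$ is just a complete lcs. With these identifications the left-hand side ${_\CC}L_\CC(X\ptens{B}Y,\CC)$ becomes $(X\ptens{B}Y)'$, while the right-hand side ${_\CC}L_B(X,L_\CC(Y,\CC))$ becomes $L_B(X,Y')$. The hypothesis that $X,Y$ are either Fr\'echet spaces or barrelled DF-spaces matches condition~(i) or~(ii) of Proposition~\ref{prop:adjass_lcs}, so Proposition~\ref{prop:adjass_mod} produces the desired isomorphism.

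For $L_B(X,Y')\cong{_B}L(Y,X')$ I would first record a natural \emph{flip} isomorphism $X\ptens{B}Y\cong Y\ptens{B^{\op}}X$ of complete lcs's, where $Y$ is regarded as a right $B^{\op}$-module via $y\cdot b:=by$ and $X$ as a left $B^{\op}$-module via $b\cdot x:=xb$. Indeed, the map $x\otimes y\mapsto y\otimes x$ is a topological isomorphism $X\Ptens Y\to Y\Ptens X$ by the symmetric universal property of the projective tensor product, and a one-line check shows that the defining subspaces for the $B$-balanced and $B^{\op}$-balanced quotients coincide under this flip (modulo a sign). Applying the first isomorphism with $B^{\op}$ in place of $B$ then gives
\[
(Y\ptens{B^{\op}}X)'\cong L_{B^{\op}}(Y,X')={_B}L(Y,X'),
\]
and composing with the flip yields the second isomorphism.

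The only real concern is bookkeeping: verifying that the trivial $\CC$-bimodule structures reduce Proposition~\ref{prop:adjass_mod} to the plain statement about continuous maps and module homomorphisms, and that the $B$-versus-$B^{\op}$ relabelling matches the conventions for left and right module maps. Neither point is substantive, so the corollary is essentially a transcription of Proposition~\ref{prop:adjass_mod}.
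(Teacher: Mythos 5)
Your proposal is correct and takes essentially the same route as the paper, which states the corollary without proof as an immediate specialization of Proposition~\ref{prop:adjass_mod} (take $A=C=\CC$, $Z=\CC$), with the second isomorphism obtained by symmetry of $\Ptens$ exactly as in your flip to $B^{\op}$. Your bookkeeping is sound; note only that in condition~(ii) of Proposition~\ref{prop:adjass_lcs} it is the second tensor factor that must be barrelled, which is why the hypothesis that \emph{both} $X$ and $Y$ are barrelled DF-spaces is what licenses applying the result in both orders.
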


\begin{corollary}
\label{tensor-hc-jc}
Let $\sC\in\{\Fr,\CBDF\}$,
let $A$ be a $\Ptens$-algebra, and let $X\in \rmod A(\sC)$,
$Y\in A\lmod(\sC)$, $Z\in\sC$. Then there exists a natural vector space
isomorphism
\[
L(X\ptens{A} Y,Z')\cong L_A(Z\Ptens X,Y').
\]
\end{corollary}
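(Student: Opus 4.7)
The plan is to deduce this isomorphism by chaining the three previously established ``adjoint associativity / duality'' statements: Corollary~\ref{hc-jc}, Corollary~\ref{hc-jc2}, and the bimodule version Proposition~\ref{prop:adjass_mod}. Concretely, I would build the following chain of natural isomorphisms, starting from the left-hand side:
\[
L(X\ptens{A} Y,Z')
\;\cong\; L\bigl(Z,(X\ptens{A} Y)'\bigr)
\;\cong\; L\bigl(Z,L_A(X,Y')\bigr)
\;\cong\; L_A(Z\Ptens X,Y').
\]

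For the first isomorphism, I would apply Corollary~\ref{hc-jc} to the pair $(X\ptens{A} Y,Z)$, using that $L(W,Z')\cong L(Z,W')$ whenever both $W$ and $Z$ are Fr\'echet or barrelled DF. This requires checking that $W=X\ptens{A} Y$ lies in $\sC$: by admissibility $X\Ptens Y\in\sC$, so for $\sC=\Fr$ the quotient by the closed subspace~$N$ is already Fr\'echet (hence complete), while for $\sC=\CBDF$ the quotient is a barrelled DF-space and its completion remains a barrelled DF-space. For the second isomorphism I would simply invoke Corollary~\ref{hc-jc2}, which gives $(X\ptens{A} Y)'\cong L_A(X,Y')$ under exactly the same hypotheses on $X$ and $Y$. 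For the third, I would apply Proposition~\ref{prop:adjass_mod} with the substitution $A\rightsquigarrow\CC$, $B\rightsquigarrow\CC$, $C\rightsquigarrow A$, and with its $X,Y,Z$ replaced by $Z,X,Y'$ respectively; since $Z,X\in\sC\in\{\Fr,\CBDF\}$ and $X$ is in particular barrelled in the CBDF case, condition (i) or (ii) of Proposition~\ref{prop:adjass_lcs} is satisfied, so the proposition applies and yields $L(Z,L_A(X,Y'))\cong L_A(Z\Ptens X,Y')$.

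The naturality of each isomorphism in the chain is automatic from the construction of the three quoted results, so composing them gives the claim. The only point requiring care, rather than a genuine obstacle, is the verification in the first step that $X\ptens{A} Y$ indeed belongs to the appropriate admissible class so that Corollary~\ref{hc-jc} is applicable; once this is done, the proof is a direct concatenation.
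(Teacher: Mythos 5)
Your overall strategy is the right one --- chain together the duality isomorphisms --- but the specific chain you propose has a genuine gap at the second step, precisely where you claim it is a matter of ``simply invoking'' Corollary~\ref{hc-jc2}. That corollary provides only a \emph{vector space} isomorphism $\Phi\colon(X\ptens{A}Y)'\to L_A(X,Y')$. To pass from $L\bigl(Z,(X\ptens{A}Y)'\bigr)$ to $L\bigl(Z,L_A(X,Y')\bigr)$ by post-composition with $\Phi$, you need $\Phi$ to identify the two target spaces \emph{as topological vector spaces} (here $(X\ptens{A}Y)'$ carries the strong topology and $L_A(X,Y')$ the topology of uniform convergence on bounded sets), or at least you need $\Phi^{-1}\circ g$ to be continuous for every continuous $g\colon Z\to L_A(X,Y')$. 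The map $\Phi$ is continuous, but continuity of $\Phi^{-1}$ amounts to asking that every bounded subset of $X\Ptens Y$ be absorbed by a set of the form $\overline{\Gamma}(B_X\otimes B_Y)$ with $B_X,B_Y$ bounded --- this is Grothendieck's ``probl\`eme des topologies,'' which is known to fail for general Fr\'echet spaces and is nowhere addressed in the paper. So the surjectivity of your second arrow is unproved. A secondary, more formal problem occurs in your third step: Proposition~\ref{prop:adjass_mod} is stated for $Z\in A\bimod C$, i.e.\ for $\Ptens$-bimodules, whereas your third argument $Y'$ is only a locally convex module (the paper explicitly warns that the action of $A$ on a dual module need not be jointly continuous); this happens to be harmless because the proof of that proposition never uses joint continuity of the action on the target, but it should be said.

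The paper's own proof avoids both issues by never forming a hom-space as an intermediate object: it writes
\[
L(X\ptens{A} Y,Z') \cong \bigl((X\ptens{A} Y)\Ptens Z\bigr)' \cong \bigl((Z\Ptens X)\ptens{A}Y\bigr)'
\cong L_A(Z\Ptens X,Y'),
\]
using Corollary~\ref{hc-jc} for the first step, the commutativity of $\Ptens$ together with the associativity of $\ptens{A}$ (a \emph{topological} isomorphism of the underlying tensor products, so dualizing it is unproblematic) for the middle step, and Corollary~\ref{hc-jc2} only once, at the outermost level, for the last step. If you want to keep your architecture, the fix is exactly this: move $Z$ inside the tensor product before dualizing, rather than moving the dual inside a hom-space.
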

\begin{proof}
This follows from Corollaries \ref{hc-jc}, \ref{hc-jc2},
the commutativity of $\Ptens$,
and the associativity of $\tens_A$, since we have
\[
L(X\ptens{A} Y,Z') \cong ((X\ptens{A} Y)\Ptens Z)' \cong ((Z\Ptens X)\ptens{A}Y)'
\cong L_A(Z\Ptens X,Y'). \qedhere
\]
\end{proof}

The following result was proved in \cite[Proposition 3.3]{AP-weak}
for $\sC=\Fr$.
We now give a shorter proof which holds both for $\sC=\Fr$ and
$\sC=\CBDF$.

\begin{prop}
\label{prop:ext-dual}
Let $\sC\in\{\Fr,\CBDF\}$, and
let $A\in\alg(\sC)$.
Then for all $X\in A\lmod(\sC)$, $Y\in\rmod A(\sC)$, $n\in\Z_+$
there is a natural vector space isomorphism $\Ext^n_A(X,Y')\cong\Ext^n_A(Y,X')$.
\end{prop}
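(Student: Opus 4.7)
The plan is to use Corollary~\ref{hc-jc2} to convert a free resolution of $X$ into an acyclic co-resolution of $X'$ in $\rmod A$, and then to invoke the standard acyclic-resolution lemma of homological algebra.

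First I would pick a free resolution $P_\bullet \to X \to 0$ in $A\lmod(\sC)$, so that each $P_p = A_+ \Ptens E_p$ with $E_p \in \sC$ (note $A_+ \Ptens E_p \in \sC$ by $(\sC3)$). By definition $\Ext^n_A(X,Y') = H^n({_A}L(P_\bullet,Y'))$, and since both $P_p$ and $Y$ lie in $\sC$, Corollary~\ref{hc-jc2} supplies a natural isomorphism of cochain complexes ${_A}L(P_\bullet,Y') \cong L_A(Y, P_\bullet')$, giving
\[
\Ext^n_A(X,Y') \cong H^n\bigl(L_A(Y, P_\bullet')\bigr).
\]
Because $P_\bullet \to X \to 0$ is admissible, it carries a continuous linear contracting homotopy, whose dual equips $0 \to X' \to P_0' \to P_1' \to \cdots$ with a continuous linear contracting homotopy as well, making it split exact in $\CLCS$; in other words, it is an exact co-resolution of $X'$ by the right locally convex $A$-modules $P_p'$.

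Next I would show that each $P_p'$ is acyclic for $\Ext^*_A(Y,-)$. Picking a free resolution $Q_\bullet \to Y \to 0$ in $\rmod A(\sC)$, we get
\[
\Ext^k_A(Y,P_p') = H^k\bigl(L_A(Q_\bullet, P_p')\bigr) \cong H^k\bigl({_A}L(P_p, Q_\bullet')\bigr)
\]
by a second application of Corollary~\ref{hc-jc2}. Since $P_p$ is free, ${_A}L(P_p,-)$ reduces to the functor $L(E_p,-)$, which preserves the split exactness of $0 \to Y' \to Q_0' \to Q_1' \to \cdots$; hence $\Ext^k_A(Y,P_p') = 0$ for all $k > 0$.

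A standard dimension-shifting argument (break $0 \to X' \to P_\bullet'$ into short admissible sequences and iterate the long exact sequence of $\Ext^*_A(Y,-)$) then identifies $\Ext^n_A(Y, X')$ with $H^n(L_A(Y, P_\bullet'))$, and combining the two displayed isomorphisms yields the desired natural isomorphism. The main technical point is making sure that all sequences in sight remain split exact in $\CLCS$ after dualization and after application of the various $L$-functors; this is where the hypothesis $\sC \in \{\Fr, \CBDF\}$ enters through Corollary~\ref{hc-jc2}, because outside this setting the adjoint associativity needed to swap left and right $\Hom$'s can fail and the passage to duals loses too much structure.
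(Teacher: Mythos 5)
Your proof is correct, but it takes a genuinely different route from the paper's. You fix separate free resolutions $P_\bullet\to X$ and $Q_\bullet\to Y$, dualize $P_\bullet$ to obtain a co-resolution $0\to X'\to P_0'\to P_1'\to\cdots$ that is split exact in $\CLCS$ and has $\Ext^*_A(Y,-)$-acyclic terms, and then balance via the acyclic-resolution lemma. This obliges you to establish (as you acknowledge) the long exact sequence of $\Ext^*_A(Y,-)$ in the second variable for $\CLCS$-split short sequences of general locally convex modules; this does go through, since $L_A(Q,-)$ sends such sequences to exact ones whenever $Q$ is projective in $\rmod A(\sC)$, and the short pieces of the dualized resolution are indeed $\CLCS$-split because the dual of the contracting homotopy yields continuous idempotents complementing each kernel. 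The paper instead takes a single projective resolution $P_\bullet\to A_+$ in $A\bimod A(\sC)$, notes that $P_\bullet\ptens{A}X$ and $Y\ptens{A}P_\bullet$ are then projective resolutions of $X$ and $Y$, and applies Corollary~\ref{hc-jc2} twice to identify both $\Ext^n_A(X,Y')$ and $\Ext^n_A(Y,X')$ with $H^n\bigl((Y\ptens{A}P_\bullet\ptens{A}X)'\bigr)$; the symmetry of this one complex does all the work, with no dimension shifting, no second-variable long exact sequence, and no acyclicity computation. Your argument is longer and requires more of the exact-category machinery to be spelled out, but it is self-contained in that it does not use the fact that tensoring a projective bimodule resolution of $A_+$ over $A$ with a module again gives a projective resolution. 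Both approaches hinge on the same duality, Corollary~\ref{hc-jc2}, and both leave naturality to the standard comparison of resolutions.
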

\begin{proof}
Let $P_\bullet\to A_+$ be a projective resolution of $A_+$ in $A\bimod A(\sC)$.
Then $P_\bullet\ptens{A} X\to X$ is a projective resolution of $X$ in $A\lmod(\sC)$,
and $Y\ptens{A} P_\bullet\to Y$ is a projective resolution of $Y$ in $\rmod A(\sC)$.
Applying Corollary~\ref{hc-jc2} twice, we obtain natural vector space isomorphisms
\[
\begin{split}
\Ext^n_A(X,Y')
=H^n({_A}L\bigl(P_\bullet\ptens{A} X,Y')\bigr)
&\cong H^n\bigl((Y\ptens{A} P_\bullet \ptens{A} X)'\bigr)\\
&\cong H^n\bigl(L_A(Y\ptens{A} P_\bullet,X')\bigr)
=\Ext^n_A(Y,X'). \qedhere
\end{split}
\]
\end{proof}

Our next theorem generalizes \cite[Proposition 3.4]{AP-weak}.

\begin{theorem}
\label{ext-top-flat}
Let $\sC\in\{\Fr,\CBDF\}$, and
let $A\in\alg(\sC)$.
The following properties of $X\in A\lmod(\sC)$ are equivalent:
\begin{mycompactenum}
\item $X$ is topologically flat;
\item $\Ext_A^1(Y,X')=0\quad\forall\,\,Y\in \rmod A(\sC)$;
\item
$\Ext_A^1(X,Y')=0\quad\forall\,\,Y\in \rmod A(\sC)$;
\item
$\Ext_A^n(Y,X')=0\quad\forall\,\,Y\in \rmod A(\sC),\quad\forall\,\,n\in\N$;
\item
$\Ext_A^n(X,Y')=0\quad\forall\,\,Y\in \rmod A(\sC),\quad\forall\,\,n\in\N$;
\item
the functor $L_A(-,X')\colon\rmod A(\sC)\to\Vect$ takes
short admissible sequences to exact sequences.
\end{mycompactenum}
\end{theorem}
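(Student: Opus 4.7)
The plan is to organize the six conditions into three clusters. The $\Ext$-conditions (ii)--(v) turn out to be straightforwardly equivalent among themselves; the equivalence (ii) $\Leftrightarrow$ (vi) is a standard long-exact-sequence argument; and the heart of the theorem is (i) $\Leftrightarrow$ (vi), which I obtain via Palamodov's Theorem~\ref{top-exact} together with the duality isomorphisms of Corollaries~\ref{hc-jc2} and~\ref{tensor-hc-jc}.

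First, Proposition~\ref{prop:ext-dual} immediately yields (ii) $\Leftrightarrow$ (iii) and (iv) $\Leftrightarrow$ (v). The implication (iv) $\Rightarrow$ (ii) is trivial, and the converse (ii) $\Rightarrow$ (iv) will follow by dimension shifting: given $Y\in\rmod A(\sC)$, choose an admissible short sequence $0\to K\to P\to Y\to 0$ with $P$ projective (available because $\rmod A(\sC)$ has enough projectives); the long exact $\Ext$-sequence then yields $\Ext^n_A(Y,X')\cong\Ext^{n-1}_A(K,X')$ for $n\ge 2$, and a routine induction on $n$ (base case $n=1$ being (ii)) completes the argument. So (ii)--(v) are all equivalent. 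For (ii) $\Leftrightarrow$ (vi), the direction (vi) $\Rightarrow$ (ii) uses the same type of resolution: (vi) makes $L_A(P,X')\to L_A(K,X')$ surjective, and since $\Ext^1_A(P,X')=0$ one reads off $\Ext^1_A(Y,X')=0$ from the long exact sequence. The reverse (ii) $\Rightarrow$ (vi) is read off the initial segment of the long exact sequence attached to any admissible short sequence in $\rmod A(\sC)$.

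The heart of the theorem is (i) $\Leftrightarrow$ (vi). For (i) $\Rightarrow$ (vi), I dualize: given an admissible sequence in $\rmod A(\sC)$, topological flatness provides a strictly exact tensor sequence in $\CLCS$, Hahn--Banach makes dualization exact, and Corollary~\ref{hc-jc2} identifies the duals $(Y_i\ptens{A}X)'$ with $L_A(Y_i,X')$. For the more substantive direction (vi) $\Rightarrow$ (i), I apply Palamodov's Theorem~\ref{top-exact} to the candidate sequence
\[
0\to Y_1\ptens{A}X\to Y_2\ptens{A}X\to Y_3\ptens{A}X\to 0.
\]
For each set $S$, writing $\ell_\infty(S)=\ell_1(S)'$ and applying Corollary~\ref{tensor-hc-jc} with $Z=\ell_1(S)$ gives the natural identification
\[
L(Y_i\ptens{A}X,\ell_\infty(S))\cong L_A(\ell_1(S)\Ptens Y_i,X').
\]
Since admissibility of $0\to Y_1\to Y_2\to Y_3\to 0$ is $\CLCS$-splitness, tensoring with the Banach space $\ell_1(S)$ preserves it, and property $(\sC3)$ ensures the result is an admissible sequence in $\rmod A(\sC)$. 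Hypothesis (vi) then supplies the exactness after applying $L_A(-,X')$, which, via the isomorphism above, is precisely the condition demanded by Palamodov's theorem. The main subtlety is this matching of dualities: Corollary~\ref{tensor-hc-jc}, which itself rests on the Fr\'echet/DF exponential law of Proposition~\ref{prop:adjass_lcs}, is the crucial input and explains why the statement is restricted to $\sC\in\{\Fr,\CBDF\}$.
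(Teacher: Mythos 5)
Your proof is correct and follows essentially the same route as the paper's: the equivalences among (ii)--(vi) are the standard homological ones (where the paper simply cites \cite[III.3.7]{H2}, you run the dimension-shifting/long-exact-sequence argument explicitly), and the crucial equivalence (i) $\Leftrightarrow$ (vi) is obtained exactly as in the paper via Palamodov's Theorem~\ref{top-exact} together with Corollaries~\ref{hc-jc2} and~\ref{tensor-hc-jc}, including the key trick of writing $\ell_\infty(S)=(\ell_1(S))'$ and passing to the admissible sequence $\ell_1(S)\Ptens Y_\bullet$. The only cosmetic difference is that for (i) $\Rightarrow$ (vi) you justify exactness of the dualized tensor sequence by Hahn--Banach directly, whereas the paper invokes Palamodov's theorem once more; both are valid.
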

\begin{proof}
$\mathrm{(ii)\Leftrightarrow (iii)}$, $\mathrm{(iv)\Leftrightarrow (v)}$:
these are special cases
of Proposition~\ref{prop:ext-dual}.

$\mathrm{(ii)\Leftrightarrow (iv)\Leftrightarrow (vi)}$: these are special cases of
\cite[III.3.7]{H2}.

$\mathrm{(i)\Rightarrow (vi)}$.
By assumption,
for each short admissible sequence \eqref{shortadm}
in $\rmod A(\sC)$ the sequence \eqref{short_tens}
is strictly exact in $\CLCS$. By Palamodov's Theorem~\ref{top-exact}, the
dual sequence
\begin{equation}
\label{dual_tens}
0 \to (Y_3\ptens{A} X)' \to (Y_2\ptens{A} X)' \to (Y_1\ptens{A} X)' \to 0
\end{equation}
is exact in $\Vect$. Corollary~\ref{hc-jc2} implies that \eqref{dual_tens} is isomorphic to
\begin{equation}
\label{Hom_X'}
0 \to L_A(Y_3,X') \to L_A(Y_2,X')\to L_A(Y_1,X') \to 0.
\end{equation}
This yields (vi).

$\mathrm{(vi)\Rightarrow (i)}$.
We want to show that
for each short admissible sequence \eqref{shortadm}
in $\rmod A(\sC)$ the sequence \eqref{short_tens}
is strictly exact in $\CLCS$. By Palamodov's Theorem~\ref{top-exact},
this means precisely that for each set $S$ the sequence
\begin{equation}
\label{L_tens_linf}
0 \to L(Y_3\ptens{A} X,\ell_\infty(S)) \to L(Y_2\ptens{A} X,\ell_\infty(S))
\to L(Y_1\ptens{A} X,\ell_\infty(S)) \to 0
\end{equation}
is exact in $\Vect$. Taking into account the isomorphism
$\ell_\infty(S)\cong (\ell_1(S))'$ and applying Corollary~\ref{tensor-hc-jc},
we see that \eqref{L_tens_linf} is isomorphic to
\begin{equation}
\label{Homl1_X'}
0 \to L_A(\ell_1(S)\Ptens Y_3,X') \to L_A(\ell_1(S)\Ptens Y_2,X')
\to L_A(\ell_1(S)\Ptens Y_1,X') \to 0.
\end{equation}
Since $\ell_1(S)\Ptens Y_\bullet$ is admissible in $\rmod A(\sC)$,
we see that \eqref{Homl1_X'}
is exact in $\Vect$ by (vi). In view of the above remarks, this completes the proof.
\end{proof}

The next proposition shows that a flat Banach module over a Banach algebra
remains topologically flat if we consider it as an object of the bigger category
of Fr\'echet modules or of complete barrelled DF-modules.

\begin{prop}
\label{banach-top-flat}
Let $A$ be a Banach algebra and let $X$ be a left Banach $A$-module.
The following conditions are equivalent:
\begin{mycompactenum}
\item
$X$ is flat (or, equivalently, topologically flat) relative to $\Ban$;
\item
$X$ is flat (or, equivalently, topologically flat) relative to $\Fr$;
\item
$X$ is topologically flat relative to $\CBDF$.
\end{mycompactenum}
\end{prop}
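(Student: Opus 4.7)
The plan is organized around the following observations. First, the implications (ii)$\Rightarrow$(i) and (iii)$\Rightarrow$(i) will be immediate by restriction: any admissible short exact sequence in $\rmod A(\Ban)$ remains admissible in $\rmod A(\Fr)$ and in $\rmod A(\CBDF)$, and the tensored sequence consists of Banach spaces, so strict exactness in $\CLCS$ coincides with exactness in $\Ban$ via the Open Mapping Theorem.

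For the reverse implications, the key structural observation is that for each $E\in\Ban$, the free Banach $A$-module $A_+\Ptens E$ will be projective in $A\lmod(\sC)$ for each $\sC\in\{\Ban,\Fr,\CBDF\}$: the adjunction ${_A}L(A_+\Ptens E,M)\cong L(E,M)$, valid for every locally convex left $A$-module $M$, reduces the lifting problem against any admissible epimorphism to lifting a continuous linear map out of $E$, and this succeeds because admissible epimorphisms admit continuous linear sections regardless of the ambient admissible subcategory. Consequently, the standard bar resolution $P_n:=A_+\Ptens A^{\otimes n}\Ptens X$ of $X$, which consists of such free Banach modules and carries a continuous-linear contracting homotopy, serves as a projective resolution of $X$ simultaneously in all three categories. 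This gives $\Ext^{n,\Ban}_A(X,M)\cong\Ext^{n,\Fr}_A(X,M)\cong\Ext^{n,\CBDF}_A(X,M)$ for every locally convex left $A$-module $M$.

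Assuming (i), I would derive (ii) and (iii) as follows. By Theorem~\ref{ext-top-flat} combined with Proposition~\ref{prop:ext-dual} and the above identity, topological flatness of $X$ in $\sC\in\{\Fr,\CBDF\}$ reduces to the vanishing of $\Ext^{1,\Ban}_A(X,Y')$ for all $Y\in\rmod A(\sC)$. An application of Corollary~\ref{hc-jc}, which is available since $A^{\otimes n}\Ptens X$ is Banach and hence lies in both $\Fr$ and $\CBDF$, identifies this $\Ext$ group with $H^1\bigl((Y\Ptens A^{\otimes\bullet}\Ptens X)'\bigr)$. By Palamodov's Theorem~\ref{top-exact}, the vanishing of this cohomology will follow once the bar complex $\beta_\bullet(Y):=Y\Ptens A^{\otimes\bullet}\Ptens X$ is shown to be strictly exact in $\CLCS$ at positive degrees.

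Establishing this strict exactness is the main technical step. For $\sC=\Fr$, the argument follows \cite[Proposition 3.5]{AP-weak}: joint continuity of the $A$-action permits presenting a Fr\'echet $A$-module $Y$ as a projective limit of Banach $A$-modules with $A$-invariant defining seminorms, whereupon strict exactness transfers from the Banach bar complexes. For $\sC=\CBDF$, I would instead exploit the bornological character of barrelled DF-spaces: each $Y\in\rmod A(\CBDF)$ admits a representation as an LB-colimit $Y=\varinjlim Y_k$ of Banach $A$-modules built from an $A$-invariant fundamental sequence of bounded sets (available via joint continuity). Since $\Ptens$ commutes with such regular inductive limits when the other factor is Banach, one has $\beta_\bullet(Y)=\varinjlim\beta_\bullet(Y_k)$; each $\beta_\bullet(Y_k)$ is strictly exact in $\Ban$ by (i), and the main remaining obstacle---passing from strict exactness in $\Ban$ at each stage to strict exactness in $\CLCS$ of the LB-colimit---would be handled by a second application of Palamodov's theorem.
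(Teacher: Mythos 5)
Your easy direction ((ii), (iii) $\Rightarrow$ (i) by restriction) matches the paper, and your reduction of the converse to the vanishing of $\Ext^1_A(X,Y')$ via the bar resolution is legitimate in principle. But the route you choose for the substantive direction contains a genuine gap in the $\CBDF$ case, precisely at the step you label ``the main technical step.'' First, the identity $\beta_\bullet(Y)=\varinjlim_k\beta_\bullet(Y_k)$ is not automatic: while a complete barrelled DF-space is indeed an (LB)-space $\varinjlim_k Y_{B_k}$ with $A$-invariant Banach disks, the completed projective tensor product of an (LB)-space with a Banach space need not coincide topologically with the inductive limit of the stepwise tensor products --- this is exactly the subject of Mangino's paper \cite{EMM} and requires regularity hypotheses that a general $Y\in\rmod A(\CBDF)$ need not satisfy (at best one gets that $\beta_\bullet(Y)$ is the \emph{completion} of the colimit, which introduces a further non-exact operation). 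Second, and more fatally, your ``second application of Palamodov's theorem'' requires the exactness of $\varprojlim_k L(\beta_\bullet(Y_k),\ell_\infty(S))$, i.e.\ a $\varprojlim^1$-vanishing for a countable inverse system of exact Banach complexes. The linking maps of that system are induced by the continuous injections $Y_k\to Y_{k+1}$, which are \emph{not} topological embeddings, so the restriction maps $L(\beta_j(Y_{k+1}),\ell_\infty(S))\to L(\beta_j(Y_k),\ell_\infty(S))$ have no reason to satisfy a Mittag--Leffler condition; the injectivity of $\ell_\infty(S)$ does not help here. These are exactly the pathologies of the DF-setting that the paper's introduction warns about, so they cannot be waved away.

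The paper avoids all of this with a much shorter argument that you may want to adopt: by \cite[VII.1.14]{H2}, flatness of $X$ in $\Ban$ means $X'$ is an injective Banach $A$-module, hence a retract of the cofree module $L(A_+,X')$; Taylor's adjunction $L_A(Y,L(A_+,X'))\cong L(Y,X')$ holds for an \emph{arbitrary} locally convex module $Y$, so for any admissible sequence $Y_\bullet$ in $\rmod A(\sC)$ the complex $L_A(Y_\bullet,X')$ is a retract of the split-exact complex $L(Y_\bullet,X')$ and is therefore exact; condition (vi) of Theorem~\ref{ext-top-flat} then gives topological flatness. This bypasses the bar resolution, the commutation of $\Ptens$ with limits and colimits, and every Mittag--Leffler issue. (Your Fr\'echet case is salvageable along the lines of \cite{AP-weak}, where the requisite projective-limit and $\varprojlim^1$ arguments are actually carried out, but the $\CBDF$ case as you sketch it does not go through.)
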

\begin{proof}
Clearly, each of the conditions (ii) and (iii) implies (i). Conversely, let
$\sC$ denote either of the categories $\Fr$ or $\CBDF$, and suppose that (i) holds.
By \cite[VII.1.14]{H2}, condition (i) means precisely
that $X'$ is injective in $\rmod A(\Ban)$.
Using \cite[III.1.31]{H2}, we see that $X'$ is a retract of
$L(A_+,X')$ in $\rmod A(\Ban)$. Hence for each short admissible sequence
$Y_\bullet$ in $\rmod A$ the sequence $L_A(Y_\bullet,X')$ is a retract
of $L_A(Y_\bullet,L(A_+,X'))$. On the other hand,
\cite[Proposition 3.2]{JLT} implies that
\[
L_A(Y_\bullet,L(A_+,X')) \cong L(Y_\bullet,X').
\]
Hence $L_A(Y_\bullet,X')$ is a retract of $L(Y_\bullet,X')$, which is clearly
exact in $\Vect$. Therefore $L_A(Y_\bullet,X')$ is exact in $\Vect$.
Applying Theorem \ref{top-exact}, we conclude that
$X$ is topologically flat in $A\lmod(\sC)$.
\end{proof}

\begin{rem}
The equivalence of (i) and (ii) in Proposition~\ref{banach-top-flat}
was proved in \cite[Proposition 4.11]{AP}.
\end{rem}

We now turn to topological amenability, using Helemskii--Sheinberg's approach
\cite{Hel_Shein} as a motivation.
Let $\sC$ be an admissible subcategory of $\CLCS$, and let $A\in\alg(\sC)$.

\begin{definition}
We say that $A$ is \emph{topologically amenable} (relative to $\sC$)
if $A_+$ is topologically flat in $A\bimod A(\sC)$.
\end{definition}

\begin{rem}
According to \cite{H2}, $A$ is {\em amenable} if $A_+$ is flat
in $A\bimod A(\sC)$. As in Remark~\ref{flat-top-flat}, we would like
to stress that amenability and topological amenability are
formally different in the general case,
but they are equivalent if $\sC\subset\Fr$.
\end{rem}

\begin{example}
\label{example:contr_topamen}
Recall from \cite[Chap. VII]{H3} (see also \cite[Postscript]{H2})
that $A$ is {\em contractible} if $A_+$ is projective
in $A\bimod A(\sC)$. Since projective modules are topologically flat
(see Example~\ref{example:proj_topflat}), we conclude that each
contractible algebra is topologically amenable.
\end{example}

Recall that the amenability of a Banach algebra can be rephrased
in the language of derivations. Our next result gives a similar characterization
in the categories $\Fr$ and $\CBDF$. For Fr\'echet algebras, this
was proved in \cite[Corollary 3.5]{AP-weak}.

\begin{theorem}
\label{topam-der}
Let $\sC\in\{\Fr,\CBDF\}$, and
let $A\in\alg(\sC)$.
Then $A$ is topologically amenable relative to $\sC$
if and only if for each $X\in A\bimod A(\sC)$ every continuous
derivation $A\to X'$ is inner.
\end{theorem}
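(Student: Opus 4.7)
The plan is to apply Theorem \ref{ext-top-flat} to the enveloping algebra $A^e = A_+ \Ptens A_+^{\op}$ and the bimodule $A_+ \in A\bimod A(\sC) = A^e\lmod(\sC)$, then convert the resulting $\Ext^1$-vanishing condition into the language of derivations via the bar complex.

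Property $(\sC 3)$ guarantees $A^e \in \alg(\sC)$, so Theorem \ref{ext-top-flat} applies to the algebra $A^e$ and the module $A_+$. Condition (iii) of that theorem says that $A_+$ is topologically flat in $A^e\lmod(\sC)$ — which is exactly the topological amenability of $A$ — if and only if $\Ext_{A^e}^1(A_+, Y') = 0$ for every $Y \in \rmod{A^e}(\sC)$. Under the standard category equivalence $\rmod{A^e}(\sC) \cong A\bimod A(\sC)$, this becomes the vanishing $\Ext_{A^e}^1(A_+, X') = 0$ for every $X \in A\bimod A(\sC)$.

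It remains to identify $\Ext_{A^e}^1(A_+, X')$ with continuous derivations $A \to X'$ modulo inner derivations. I would use the standard bar resolution
$$\cdots \to A_+ \Ptens A \Ptens A \Ptens A_+ \to A_+ \Ptens A \Ptens A_+ \to A_+ \Ptens A_+ \to A_+ \to 0$$
in $A\bimod A(\sC)$; each term $A_+ \Ptens A^{\Ptens n} \Ptens A_+$ lies in $\sC$ by $(\sC 3)$ and is a free, hence projective, $A$-bimodule, and the usual contracting homotopy $a_0 \otimes \cdots \otimes a_{n+1} \mapsto 1 \otimes a_0 \otimes \cdots \otimes a_{n+1}$ witnesses admissibility. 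Applying $_AL_A(-, X')$ and using the adjoint-associativity identification
$$_AL_A(A_+ \Ptens A^{\Ptens n} \Ptens A_+,\; X') \cong L(A^{\Ptens n}, X'),$$
supplied by iterated application of Proposition \ref{prop:adjass_mod}, we recover the continuous Hochschild cochain complex of $A$ with coefficients in $X'$. In degree $1$ its cohomology is by construction the quotient $\mathcal Z^1(A, X')/\mathcal B^1(A, X')$ of continuous derivations by inner ones, which finishes the chain of equivalences.

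The main subtlety is checking the adjoint-associativity step in the $\CBDF$ setting: one has to know that every space that enters — $A_+$ itself and its tensor powers $A^{\Ptens n}$, which remain in $\sC$ by $(\sC 3)$ — is a barrelled DF-space, so that case (ii) of Proposition \ref{prop:adjass_lcs} applies. For $\sC = \Fr$ every object is Fr\'echet and case (i) applies instead, so the argument runs uniformly for both admissible categories in the statement, recovering \cite[Corollary 3.5]{AP-weak} in the Fr\'echet case.
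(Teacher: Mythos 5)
Your proposal is correct and follows essentially the same route as the paper: both reduce topological amenability to the vanishing of $\Ext^1_{A^e}(A_+,X')$ via Theorem~\ref{ext-top-flat} and then identify this group with continuous derivations modulo inner ones. The only difference is that you unpack the isomorphism $\mathcal H^1(A,X')\cong\Ext^1_{A^e}(A_+,X')$ explicitly through the bar resolution and Proposition~\ref{prop:adjass_mod}, whereas the paper simply cites \cite[III.4.9]{H2} for it.
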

\begin{proof}
It is a standard fact (see, e.g., \cite[Chap. I, Subsection 2.1]{H2})
that every continuous derivation $A\to X'$ is inner if and only if
$\cH^1(A,X')=0$, where $\cH^1(A,X')$ is the 1st continuous Hochschild
cohomology group of $A$ with coefficients in $X'$.
By \cite[III.4.9]{H2}, we have a vector space isomorphism
$\cH^1(A,X')\cong\Ext^1_{A^e}(A_+,X')$.
Now the result follows from Theorem~\ref{ext-top-flat}.
\end{proof}

In the $\CBDF$ category it is also possible to relate topological amenability to amenability.

\begin{corollary}
\label{am-top-am}
Let $A$ be a complete barrelled DF-algebra which is amenable relative to $\CBDF$.
Then $A$ is topologically amenable relative to $\CBDF$.
\end{corollary}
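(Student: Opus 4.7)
The plan is to use Theorem \ref{topam-der} to convert topological amenability into a statement about Ext: it suffices to prove that $\Ext^1_{A^e}(A_+, X') = 0$ for every $X \in A \bimod A(\CBDF)$. Applying Proposition \ref{prop:ext-dual} transforms this into $\Ext^1_{A^e}(X, A_+') = 0$, and by Theorem \ref{ext-top-flat} the combined statement is precisely the topological flatness of $A_+$ in $A \bimod A(\CBDF)$. Thus the corollary reduces to the implication \emph{flat implies topologically flat} for the specific bimodule $A_+$, which is what I aim to verify.

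Given an admissible short exact sequence $0 \to Y_1 \to Y_2 \to Y_3 \to 0$ in $\rmod A^e(\CBDF)$, the amenability hypothesis provides $\Vect$-exactness of $Y_\bullet \ptens{A^e} A_+$, which I must upgrade to strict exactness in $\CLCS$. Applying Palamodov's Theorem \ref{top-exact}, it suffices to establish exactness of $L(Y_\bullet \ptens{A^e} A_+, \ell_\infty(S))$ for every set $S$. The adjoint associativity of Corollary \ref{tensor-hc-jc} combined with $\ell_\infty(S) \cong \ell_1(S)'$ rewrites this as exactness of $L_{A^e}(\ell_1(S) \Ptens Y_\bullet, A_+')$. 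Since tensoring with a fixed Banach space preserves $\CLCS$-splittings, $\ell_1(S) \Ptens Y_\bullet$ is again admissible in $\rmod A^e(\CBDF)$, so amenability supplies $\Vect$-exactness of $(\ell_1(S) \Ptens Y_\bullet) \ptens{A^e} A_+$, whose continuous dual --- by Corollary \ref{hc-jc2} --- is exactly the sequence $L_{A^e}(\ell_1(S) \Ptens Y_\bullet, A_+')$ that we want to be exact.

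The main obstacle is therefore the passage from $\Vect$-exactness of a $\CLCS$ sequence to exactness of its continuous dual, since in general the latter requires strict exactness of the original (to invoke Hahn--Banach-type extensions) and cannot be deduced from flatness alone. My approach to closing this gap is to exploit the uniformity of the amenability hypothesis: the flatness conclusion holds not just for a single admissible sequence $Y_\bullet$ but for the entire family $\ell_1(S) \Ptens Y_\bullet$ indexed by sets $S$, and, combined with the Palamodov--adjoint machinery set up in Corollaries \ref{hc-jc2} and \ref{tensor-hc-jc}, this should be enough to recover the strict exactness required for topological flatness. In spirit, the argument parallels the transfer of flatness between admissible categories carried out in the proof of Proposition \ref{banach-top-flat}, now applied to the single bimodule $A_+$ rather than to a general flat Banach module.
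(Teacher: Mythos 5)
Your reduction is set up correctly: the chain Theorem~\ref{topam-der} $\to$ Theorem~\ref{ext-top-flat} $\to$ Palamodov $\to$ Corollary~\ref{tensor-hc-jc} faithfully reproduces the equivalence $\mathrm{(i)\Leftrightarrow(vi)}$ of Theorem~\ref{ext-top-flat} for the bimodule $A_+$, and your observation that $\ell_1(S)\Ptens Y_\bullet$ is again admissible is also right. But the argument does not close, and you say so yourself: the final step requires passing from $\Vect$-exactness of $(\ell_1(S)\Ptens Y_\bullet)\ptens{A^e}A_+$ to exactness of its continuous dual $L_{A^e}(\ell_1(S)\Ptens Y_\bullet, A_+')$, and this is false in general. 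Concretely, surjectivity of the last map in the dual sequence needs a Hahn--Banach extension, which requires the first map of the primal sequence to be a \emph{topological} embedding --- and by Proposition~\ref{prop:flat_topinj} that topological injectivity is precisely the topological flatness you are trying to prove. The appeal to ``uniformity over $S$'' does not help: replacing $Y_\bullet$ by $\ell_1(S)\Ptens Y_\bullet$ reproduces the identical problem for a larger family of admissible sequences, so the argument is circular. Note also that Remark~\ref{flat-top-flat} explicitly conjectures that flatness does \emph{not} imply topological flatness in general, so a proof that works only with the flatness of $A_+$ as a black box, without using some additional structure, should be expected to fail.

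The paper avoids this entirely. Its proof of Corollary~\ref{am-top-am} is two lines: by \cite[Theorem 4.4]{KP-amenable}, amenability of a complete barrelled DF-algebra already implies that every continuous derivation $A\to X'$ (for $X\in A\bimod A(\CBDF)$) is inner; Theorem~\ref{topam-der} then identifies this derivation property with topological amenability. The missing ingredient in your write-up is exactly that external input (or an equivalent argument, e.g.\ via an approximate/virtual diagonal), which converts the flatness hypothesis into the vanishing of $\cH^1(A,X')$ without ever needing to dualize a merely $\Vect$-exact sequence.
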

\begin{proof}
By \cite[Theorem 4.4]{KP-amenable}, for each $X\in A\bimod A(\CBDF)$ every continuous derivation $A\to X'$ is inner. Now the result follows
from Theorem~\ref{topam-der}.
\end{proof}

If $A$ is a Banach algebra then the above notions coincide.

\begin{prop}
\label{am-top-am-banach}
Let $\sC\in\{\Fr,\CBDF\}$, and
let $A$ be a Banach algebra. Then $A$ is topologically amenable relative
to $\sC$ if and only if $A$ is amenable relative to $\Ban$.
\end{prop}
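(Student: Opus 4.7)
The plan is to observe that both conditions in the statement can be rephrased as flatness of $A_+$ as a bimodule, and then apply Proposition~\ref{banach-top-flat} to the enveloping algebra. Indeed, by the very definition, $A$ is topologically amenable relative to $\sC$ precisely when $A_+$ is topologically flat in $A\bimod A(\sC)$. On the other hand, by the classical Helemskii--Sheinberg theorem recalled in the introduction, $A$ is amenable in Johnson's sense (equivalently, amenable relative to $\Ban$) precisely when $A_+$ is flat as a Banach $A$-bimodule. Identifying bimodules with left modules over the enveloping algebra $A^e := A_+\Ptens A_+^{\op}$, both conditions become flatness statements for one and the same object $A_+$, viewed as a left $A^e$-module, but interpreted in two different categories.

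Next I would note that, when $A$ is a Banach algebra, so is $A_+$, and hence so is $A^e$, since $\Ban$ is closed under $\Ptens$ (this is condition $(\sC3)$ for $\Ban$). Moreover $A_+$ is itself a left Banach $A^e$-module. Applying Proposition~\ref{banach-top-flat} to the Banach algebra $A^e$ and to the left Banach $A^e$-module $A_+$ then gives exactly the equivalence of (i) ``flat relative to $\Ban$'' with (ii), (iii) ``topologically flat relative to $\Fr$ or to $\CBDF$''. Combined with the two reformulations above, this yields the claimed equivalence of topological amenability relative to $\sC$ and amenability relative to $\Ban$.

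I do not expect a serious obstacle; the only bookkeeping point is that the passage from $A$-bimodules to left $A^e$-modules, and the restriction/enlargement along the inclusions $\Ban\subset\Fr\subset\CLCS$ and $\Ban\subset\CBDF\subset\CLCS$, are compatible with the admissible-subcategory structure, which follows at once from properties $(\sC1)$--$(\sC3)$ for $\Ban$, $\Fr$, and $\CBDF$ together with the fact that $A_+$ itself lies in each of these subcategories when $A$ is Banach. If one wished to avoid the bimodule-to-left-module reindexing, the forward direction ``topologically amenable $\Rightarrow$ Johnson amenable'' could alternatively be read off from Theorem~\ref{topam-der} simply by restricting to Banach bimodules $X\in A\bimod A(\Ban)\subset A\bimod A(\sC)$ and invoking Johnson's original derivation characterization; only the converse genuinely requires the ``flat $\Rightarrow$ topologically flat'' upgrade supplied by Proposition~\ref{banach-top-flat}.
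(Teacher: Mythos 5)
Your proposal is correct and is essentially the paper's argument: the paper's proof consists of the single line ``this follows immediately from Proposition~\ref{banach-top-flat}'', and what you have written is precisely the unpacking of that reduction (bimodule flatness of $A_+$ is by definition left-module flatness over the Banach algebra $A^e=A_+\Ptens A_+^{\op}$, to which Proposition~\ref{banach-top-flat} applies). One negligible remark: in this paper ``amenable relative to $\Ban$'' is \emph{defined} by flatness of $A_+$ in $A\bimod A(\Ban)$, so your appeal to Helemskii--Sheinberg is not needed for the stated equivalence, only for translating it into Johnson's derivation language.
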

\begin{proof}
This follows immediately from Proposition \ref{banach-top-flat}.
\end{proof}

Since the algebras $k_0(V)$ that appear in the next section are not necessarily complete,
we adopt the following definition of topological amenability for non-complete algebras.

\begin{definition}
Let $\sC\in\{\Fr,\CBDF\}$, and let $A$ be a locally convex algebra with jointly
continuous multiplication such that $\wt{A}\in\alg(\sC)$
(where $\wt{A}$ is the completion of $A$).
We say that $A$ is {\em topologically amenable} relative to $\sC$
if $\wt{A}$ is topologically amenable relative to $\sC$.
\end{definition}

Given $A$ as above, let $A\bimod A(\sC)$ denote the category of locally convex
$A$-bimodules $X$ such that the left and right actions of $A$ on $X$ are jointly continuous
and such that the underlying space of $X$ is an object of $\sC$.
Clearly, we have an isomorphism of categories $A\bimod A(\sC)\cong\wt{A}\bimod\wt{A}(\sC)$.

Using the above definition, we can easily extend Theorem~\ref{topam-der} to non-complete
algebras.

\begin{theorem}
\label{topam-noncompl-der}
Let $\sC\in\{\Fr,\CBDF\}$, and let $A$ be a locally convex algebra with jointly
continuous multiplication such that $\wt{A}\in\alg(\sC)$.
Then $A$ is topologically amenable relative to $\sC$
if and only if for each $X\in A\bimod A(\sC)$ every continuous
derivation $A\to X'$ is inner.
\end{theorem}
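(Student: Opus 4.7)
The plan is to reduce the theorem to the already-proved Theorem~\ref{topam-der} applied to $\wt{A}$, and then to show that continuous derivations (and inner derivations) from $A$ to $X'$ correspond bijectively to continuous derivations (and inner derivations) from $\wt{A}$ to $X'$.

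First, by definition $A$ is topologically amenable relative to $\sC$ if and only if $\wt{A}$ is. Since $\wt{A}\in\alg(\sC)$, Theorem~\ref{topam-der} applies and gives that this is equivalent to: for every $X\in \wt{A}\bimod\wt{A}(\sC)$, every continuous derivation $\wt{A}\to X'$ is inner. The isomorphism of categories $A\bimod A(\sC)\cong\wt{A}\bimod\wt{A}(\sC)$ mentioned just before the statement lets us replace ``$X\in\wt{A}\bimod\wt{A}(\sC)$'' by ``$X\in A\bimod A(\sC)$''. So it suffices to establish a natural bijection
\[
\{\text{continuous derivations } A\to X'\}\;\longleftrightarrow\;
\{\text{continuous derivations } \wt{A}\to X'\}
\]
that identifies inner derivations on both sides.

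The map from right to left is restriction along the canonical embedding $A\hk\wt{A}$. For the other direction I would invoke the fact that $X'$ is complete: when $X\in\sC=\Fr$ the strong dual $X'$ is a complete $\CBDF$-space, and when $X\in\CBDF$ the strong dual $X'$ is a Fr\'echet space; in either case a continuous linear map $D\c A\to X'$ extends uniquely to a continuous linear map $\wt D\c\wt{A}\to X'$. To see that $\wt D$ is again a derivation, I would use that $X'$ is a separately continuous locally convex $\wt{A}$-bimodule (as noted in Section~\ref{sect:not_prelim}): fixing $a\in A$, the identity $\wt D(a\wt b)=a\cdot \wt D(\wt b)+\wt D(a)\cdot\wt b$ holds for $\wt b\in A$ and both sides depend continuously on $\wt b\in\wt A$, so it persists by density; then fix $\wt b\in\wt A$ and run the same density argument in the first variable. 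This yields the required inverse bijection.

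It remains to verify that the bijection preserves the inner-ness of derivations. Given $\phi\in X'$, the inner derivation $\operatorname{ad}_\phi\c\wt{A}\to X'$, $\wt a\mapsto\wt a\cdot\phi-\phi\cdot\wt a$, is continuous by separate continuity of the $\wt{A}$-bimodule structure on $X'$, and its restriction to $A$ is the inner derivation $\operatorname{ad}_\phi\c A\to X'$ associated to the same $\phi$. Conversely, an inner derivation $A\to X'$ given by some $\phi$ extends continuously to $\operatorname{ad}_\phi\c\wt A\to X'$, which is the unique continuous extension by density. Hence inner derivations correspond on the nose.

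The main step is checking that the continuous extension of a derivation is a derivation; everything else is formal. No serious obstacle is expected, since the separate continuity of the $\wt A$-actions on $X'$ plus density of $A$ in $\wt A$ suffice. Combining the three ingredients --- the definition of topological amenability for $A$, Theorem~\ref{topam-der} for $\wt A$, and the above bijection --- completes the proof.
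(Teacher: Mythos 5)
Your proposal is correct and follows essentially the same route as the paper's proof: reduce to Theorem~\ref{topam-der} for $\wt{A}$ via the category isomorphism $A\bimod A(\sC)\cong\wt{A}\bimod\wt{A}(\sC)$, and use completeness of $X'$ to extend derivations uniquely from $A$ to $\wt{A}$, checking by density and separate continuity of the actions that the extension is again a derivation and that inner derivations correspond. The only quibble is your aside that $X'$ is a complete $\CBDF$-space when $X$ is Fr\'echet (the strong dual of a Fr\'echet space need not be barrelled), but this is harmless since only the completeness of $X'$ is actually used.
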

\begin{proof}
Given $X\in A\bimod A(\sC)\cong\wt{A}\bimod\wt{A}(\sC)$, observe that $X'$ is complete
(see, e.g., \cite[28.5.(1)]{K1}). Hence each continuous derivation $A\to X'$ uniquely
extends to a continuous linear map $\wt{A}\to X'$, which is easily seen to be a derivation.
Thus we have a $1$-$1$ correspondence between the continuous derivations
$A\to X'$ and $\wt{A}\to X'$, which takes the inner derivations onto the inner derivations.
Now the result follows from Theorem~\ref{topam-der} applied to $\wt{A}$.
\end{proof}

We end this section with another consequence of topological amenability.
The proof is similar to that of \cite[Proposition 2.8.64]{D} therefore we omit it.

\begin{prop}
\label{top-am-dense-range}
Let $\sC\in\{\Fr,\CBDF\}$, and let $A$ and $B$ be locally convex algebras with jointly
continuous multiplication such that $\wt{A},\wt{B}\in\alg(\sC)$. Suppose that
$\theta\c A\to B$ is a continuous homomorphism with dense range.
If $A$ is topologically amenable relative to $\sC$, then so is $B$.
\end{prop}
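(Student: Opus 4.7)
The plan is to use the derivation characterization of topological amenability provided by Theorem~\ref{topam-noncompl-der}. Let $X\in B\bimod B(\sC)$ and let $D\c B\to X'$ be a continuous derivation; the goal is to show $D$ is inner.

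First, I would pull back the bimodule structure along $\theta$: define $A$-actions on $X$ by $a\cdot x:=\theta(a)\cdot x$ and $x\cdot a:=x\cdot\theta(a)$. Since $\theta$ is continuous and the $B$-actions on $X$ are jointly continuous by assumption, the induced $A$-actions are jointly continuous as well, so $X\in A\bimod A(\sC)$. The dual bimodule structures on $X'$ are compatible in the sense that the $A$-actions on $X'$ coincide with the pullbacks of the $B$-actions along $\theta$, i.e.\ $a\cdot f=\theta(a)\cdot f$ and $f\cdot a=f\cdot\theta(a)$ for $f\in X'$. A routine check (using that $\theta$ is a homomorphism and $D$ is a derivation) shows that the composition $D\circ\theta\c A\to X'$ is a continuous derivation with respect to these induced $A$-actions.

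Next, since $A$ is topologically amenable relative to $\sC$, Theorem~\ref{topam-noncompl-der} gives some $\xi\in X'$ such that
\[
(D\circ\theta)(a)=a\cdot\xi-\xi\cdot a=\theta(a)\cdot\xi-\xi\cdot\theta(a)\qquad (a\in A).
\]
Define $\delta_\xi\c B\to X'$ by $\delta_\xi(b):=b\cdot\xi-\xi\cdot b$. The actions of $B$ on $X'$ are separately continuous (as noted in Section~\ref{sect:not_prelim}), so for each fixed $\xi\in X'$ the map $\delta_\xi$ is continuous, and it is an inner derivation on $B$ by construction. By the previous display, $D$ and $\delta_\xi$ agree on $\theta(A)$.

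Finally, since $\theta(A)$ is dense in $B$ and both $D$ and $\delta_\xi$ are continuous maps $B\to X'$, they coincide on all of $B$, so $D=\delta_\xi$ is inner. Applying Theorem~\ref{topam-noncompl-der} once more, $B$ is topologically amenable relative to $\sC$. The only mildly delicate points are the verifications that the pulled-back actions on $X$ are jointly continuous (which follows automatically from continuity of $\theta$) and that $\delta_\xi$ is continuous (which only needs separate continuity of the $B$-action on $X'$); no further difficulty is expected.
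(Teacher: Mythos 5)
Your proof is correct and follows essentially the same route as the paper, which omits the argument but points to the standard one in Dales (pull back the bimodule along $\theta$, apply the derivation criterion of Theorem~\ref{topam-noncompl-der} to the induced derivation $D\circ\theta$ on $A$, and use density of $\theta(A)$ together with continuity to conclude that $D$ equals the resulting inner derivation on all of $B$). All the delicate points you flag (joint continuity of the pulled-back actions, separate continuity of the dual action giving continuity of $\delta_\xi$) are handled correctly.
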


\section{Topological Amenability for Co-echelon Algebras}
\label{sect:coechelon}

We are now going to investigate topological amenability in the framework of K\"othe co-echelon algebras.
Throughout this section, amenability and topological amenability are considered relative
to the category $\CBDF$ of complete barrelled DF-spaces.

The following result is a restatement of \cite[Lemma 0.5.1]{H2} adapted to
DF-spaces. The proof is essentially the same.

\begin{lemma}
\label{adjoint-surjective}
Let $X$ and $Y$ be DF-spaces such that $X$ is complete and $Y$ is quasi-barrelled,
and let $u\c X\to Y$ be a continuous linear injection. If $u$ has dense range and its adjoint $u'\c Y'\to X'$ is surjective, then $u$ is a topological isomorphism between $X$
and $Y$.
\end{lemma}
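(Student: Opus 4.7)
The plan is to prove the lemma in two stages: first show that $u$ is topologically injective (a topological embedding onto its image), and then show that $u(X)$ exhausts $Y$, so that $u$ is a continuous linear bijection admitting a continuous inverse.

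For the first stage, the key observation is that both strong duals $X'_\beta$ and $Y'_\beta$ are Fr\'echet spaces. The quasi-barrelledness of $Y$ is given by hypothesis; for $X$, the fact that a sequentially complete DF-space is quasi-barrelled, combined with the DF property (which supplies a fundamental sequence of bounded sets, hence metrizability of $X'_\beta$) yields that $X'_\beta$ is Fr\'echet. With both strong duals Fr\'echet, the continuous surjection $u'\colon Y'_\beta\to X'_\beta$ is open by Banach's open mapping theorem. Openness of $u'$ is then to be translated, via polar duality, into topological injectivity of $u$: for every bounded (equivalently, by quasi-barrelledness, equicontinuous) subset $B\subset X'$ one finds a bounded $C\subset Y'$ with $B\subset u'(C)$; passing to polars and applying the bipolar theorem produces, for every continuous seminorm $p$ on $X$, a continuous seminorm $q$ on $Y$ with $p(x)\leqslant q(u(x))$ for all $x\in X$.

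For the second stage, topological injectivity of $u$ means that $u\colon X\to u(X)$ is a homeomorphism onto its image with the subspace topology inherited from $Y$. Since $X$ is complete, so is $u(X)$, and a complete subspace of a Hausdorff lcs is closed. Combined with the hypothesis that $u$ has dense range, this forces $u(X)=Y$; hence $u$ is a continuous linear bijection that is simultaneously a topological embedding, and therefore a topological isomorphism.

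The main obstacle is the polar-duality step in the first stage. While openness of $u'$ is a routine application of the Fr\'echet open mapping theorem, transferring that openness back to topological injectivity of $u$ requires careful identification of equicontinuous with strongly bounded subsets of the duals, and it is precisely here that the quasi-barrelledness hypotheses on $Y$ and on $X$ (the latter obtained from completeness plus the DF property) are decisive.
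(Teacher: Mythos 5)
Your overall plan --- pass to the strong duals, which are Fr\'echet, apply the open mapping theorem to $u'$, transfer the resulting openness back to topological injectivity of $u$ by polar duality, and conclude via completeness and density --- is workable, and your second stage is exactly the paper's. But the first stage as written has two real problems. First, the assertion that a sequentially complete DF-space is quasi-barrelled is false: the strong dual of a non-distinguished Fr\'echet space (such as the K\"othe--Grothendieck echelon space, of the kind appearing in Lemma~\ref{lemma:no_decomp_2}) is a complete DF-space that is not quasi-barrelled. You are saved only because you never actually need quasi-barrelledness of $X$: the strong dual of \emph{any} DF-space is a Fr\'echet space, and in the polar computation the only identification needed on the $X$ side is the trivial one (equicontinuous $\Rightarrow$ strongly bounded), while the nontrivial converse is needed only on the $Y$ side, where quasi-barrelledness is a hypothesis. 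So the quasi-barrelledness of $X$, which you call ``decisive'', is neither available nor required. Second, and more seriously, the step ``for every bounded $B\subset X'$ there is a bounded $C\subset Y'$ with $B\subset u'(C)$'' is the crux of your argument and is not a routine consequence of openness of $u'$: lifting of bounded sets along quotient maps of Fr\'echet spaces is delicate, and in general one only obtains lifting \emph{with closure}, $B\subset\overline{u'(C)}$. This can be repaired --- replace $C$ by its weak-$*$ closed absolutely convex hull, which is equicontinuous (by quasi-barrelledness of $Y$) and weak-$*$ compact, so that its image under the weak-$*$ continuous map $u'$ is already closed in $X'$ --- but as submitted this is a gap at the heart of the argument.

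The paper sidesteps both issues by a shorter route: since $u$ has dense range, $u'$ is a continuous \emph{bijection} of Fr\'echet spaces, hence a topological isomorphism by the open mapping theorem, and so is $u''$; the identity $\iota_Y\circ u=u''\circ\iota_X$, together with the fact that $\iota_Y$ is a topological embedding because $Y$ is quasi-barrelled, shows that $\iota_X$ is continuous and hence itself a topological embedding, whence $u$ is a topological embedding onto its image. The remainder is your second stage verbatim. You may wish to compare your polar-duality computation with this bidual argument, which requires no lifting of bounded sets at all.
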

\begin{proof}
By assumption, $u'\c Y'\to X'$ is a continuous linear bijection between Fr\'echet spaces,
thus it is a topological isomorphism by the
Open Mapping Theorem \cite[Theorem 24.30]{MV}. Therefore $u''$ is
a topological isomorphism as well. We have
\begin{equation}
\label{bidual}
\iota_Y\circ u=u''\circ\iota_X,
\end{equation}
where $\iota_X\c X\hk X''$ and $\iota_Y\c Y\hk Y''$ are the canonical inclusions.
Since $Y$ is quasi-barrelled, it follows from \cite[11.2, Proposition 2]{J} that $\iota_Y$
is a topological embedding. Since $u''$ is a topological isomorphism,
we conclude from \eqref{bidual} that $\iota_X$ is continuous, or, equivalently,
a topological embedding [loc. cit.].
Hence $u''$ induces a topological isomorphism $u\c X\to\im u$.
Since $X$ is complete, $\im u$ is complete as well, so
$\im u$ is closed in $Y$. Therefore
$u$ is a topological isomorphism of $X$ onto $\im u=\overline{\im u}=Y$.
\end{proof}

Before proceeding to the characterization results, we list some properties of topologically amenable K\"othe co-echelon algebras of finite order.

\begin{lemma}
\label{ker_pi}
Let $1\le p<\infty$, and let $k_p(V)$ be a K\"othe co-echelon algebra.
Then the kernel of the multiplication map
$\pi\colon k_p(V)\Ptens k_p(V)\to k_p(V)$ is a complemented
subspace of $k_p(V)\Ptens k_p(V)$.
As a consequence, the quotient $k_p(V)\Ptens k_p(V)/\ker\pi$ is complete.
\end{lemma}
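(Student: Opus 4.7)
I will attempt the lemma by constructing a continuous linear projection $P\c k_p(V)\Ptens k_p(V)\to k_p(V)\Ptens k_p(V)$ with $\ker P=\ker\pi$; this yields the complementation, and identifying the image will give completeness of the quotient.

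Using the orthogonal idempotents $\{e_i\}_{i\in I}$ of $k_p(V)$, I would define $P$ on an elementary tensor $a\otimes b$ (with $a=\sum_i a_ie_i$, $b=\sum_i b_ie_i$) by
\[
P(a\otimes b):=\sum_{i\in I}a_ib_i(e_i\otimes e_i),
\]
the projection onto the diagonal in the canonical basis. Equivalently $P=\sigma\circ\pi$ where $\sigma(x):=\sum_ix_i(e_i\otimes e_i)$; since $\pi\circ\sigma$ restricts to the identity on the domain of $\sigma$, we get $P^2=P$ and $\ker P=\ker\pi$, so everything reduces to establishing continuity of $P$.

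Set $r:=\max(1,p/2)$. At each Banach step $\ell_p(v_n)\Ptens\ell_p(v_n)$ of the inductive description of $k_p(V)\Ptens k_p(V)$, I would factor $P_n=\sigma_n\circ\pi_n$ through the weighted space $\ell_r(I,v_n^2)$. H\"older's inequality yields
\[
\|uw\|_{\ell_r(I,v_n^2)}\le\|u\|_{\ell_p(v_n)}\|w\|_{\ell_p(v_n)},
\]
so $\pi_n\c\ell_p(v_n)\Ptens\ell_p(v_n)\to\ell_r(I,v_n^2)$ is bounded. The diagonal embedding $\sigma_n\c\ell_r(I,v_n^2)\to\ell_p(v_n)\Ptens\ell_p(v_n)$ is also continuous: when $p\le 2$ (so $r=1$) this follows from the elementary bound $\|\sigma_n(c)\|_\pi\le\sum_i|c_i|v_n(i)^2=\|c\|_{\ell_1(v_n^2)}$, while for $p>2$ (so $r=p/2$) I would use a randomization argument. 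Namely, writing $c_i=\gamma_i\delta_i$ with $|\gamma_i|=|\delta_i|=|c_i|^{1/2}$, and letting $u_\theta:=\sum_i\gamma_ie^{\mathrm i\theta_i}e_i$, $v_\theta:=\sum_i\delta_ie^{-\mathrm i\theta_i}e_i$ with $\theta_i\in[0,2\pi]$ independent and uniformly distributed, orthogonality of characters gives $\mathbb E_\theta[u_\theta\otimes v_\theta]=\sigma_n(c)$, so by convexity of the norm,
\[
\|\sigma_n(c)\|_\pi\le\mathbb E_\theta\|u_\theta\otimes v_\theta\|_\pi\le\|u_0\|_{\ell_p(v_n)}\|v_0\|_{\ell_p(v_n)}=\|c\|_{\ell_{p/2}(v_n^2)}.
\]

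Thus $P_n$ is bounded at each step, and since the $P_n$'s commute with the natural transition maps, they assemble into a continuous projection $P$ on $k_p(V)\Ptens k_p(V)$. The kernel $\ker\pi=\ker P$ is then complemented, with complement $\im P$ topologically isomorphic to the K\"othe co-echelon space $k_r(I,V^2)$ where $V^2:=(v_n^2)_n$ inherits conditions (W1)--(W3) from $V$. By \cite[Theorem 2.3]{BMS}, $k_r(I,V^2)$ is complete, hence the quotient $k_p(V)\Ptens k_p(V)/\ker\pi\cong\im P$ is complete as well. I expect the randomization argument for $p>2$ to be the main technical obstacle, since the naive estimate of $\|\sigma_n(c)\|_\pi$ is proportional to $\|c\|_{\ell_1(v_n^2)}$, which is strictly larger than $\|c\|_{\ell_{p/2}(v_n^2)}$ and fails to yield boundedness of $P_n$ in terms of the projective tensor norm.
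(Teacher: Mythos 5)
Your proof is correct, and its engine is the same as the paper's: both construct the continuous ``diagonal'' projection $P(a\otimes b)=\sum_i a_ib_i\,e_i\otimes e_i$ and prove its continuity at each Banach step $\ell_p(v_n)\Ptens\ell_p(v_n)$ by a randomization/orthogonality trick before passing to the inductive limit (note that both arguments rely on the nontrivial identification $k_p(V)\Ptens k_p(V)=\ind_n\ell_p(v_n)\Ptens\ell_p(v_n)$, which the paper cites as \cite[Theorem 7]{EMM} and which you use tacitly). The paper bounds the bilinear map $(x,y)\mapsto\sum_jx_jy_je_j\otimes e_j$ directly via Rademacher functions, writing it as $\int_0^1\bigl(\sum_jr_j(t)x_je_j\bigr)\otimes\bigl(\sum_jr_j(t)y_je_j\bigr)\,dt$, which handles all $1\le p<\infty$ in one stroke; your Steinhaus randomization for $\sigma_n$ is the same idea, split across the factorization $P_n=\sigma_n\circ\pi_n$ through $\ell_r(I,v_n^2)$, and your norm computation checks out. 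What your route buys is twofold: you bypass the paper's preliminary step of showing that $(e_i\otimes e_j)$ is a Schauder basis for the square ordering (which the paper uses to identify $\ker\pi=\ol{\spn}\{e_i\otimes e_j: i\ne j\}$ and to verify $\ker P=\ker\pi$), replacing it by the algebraic identity $\widetilde{\pi}\circ\sigma=\mathrm{id}$; and you obtain the explicit description $\im P\cong k_r(I,V^2)$ of the complement, whereas the paper only invokes the fact that a complemented closed subspace of a complete space is complete. Two small points you should make explicit: your $\sigma$ is not defined on all of $k_p(V)$ (for $p>2$ the series $\sum_ix_ie_i\otimes e_i$ diverges for general $x\in\ell_p(v_n)$), so the composition must really be written as $\sigma\circ\widetilde{\pi}$ with $\widetilde{\pi}$ the multiplication map recoded to take values in $k_r(I,V^2)$; and one should observe that $\widetilde{\pi}$ and $\pi$ have the same kernel, which follows because both agree with $\pi$ on elementary tensors and coordinate evaluations are continuous on both target spaces.
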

\begin{proof}
To begin with, let us show that the family $(e_i\otimes e_j)_{i,j\in\N}$
is a Schauder basis
in $k_p(V)\Ptens k_p(V)$ with respect to the square ordering of $\N\times\N$
(see \cite[Section 4.3]{RR}).
Indeed, we have $k_p(V)\Ptens k_p(V)=\ind_n \ell_p(v_n)\Ptens\ell_p(v_n)$
by \cite[Theorem 7]{EMM}.
Hence if $u\in k_p(V)\Ptens k_p(V)$ then
$u\in\ell_p(v_n)\Ptens\ell_p(v_n)$ for some $n\in\N$. Since $(e_j)_{j\in\N}$ is a Schauder basis in $\ell_p(v_n)$, it follows from \cite[Proposition 4.25]{RR}
that $(e_i\otimes e_j)_{i,j\in\N}$ is a Schauder basis
in $\ell_p(v_n)\Ptens\ell_p(v_n)$ with respect to the square ordering.
Therefore $u=\sum_{i,j=1}^{\infty}u_{ij}e_i\otimes e_j$
in $\ell_p(v_n)\Ptens\ell_p(v_n)$ hence also in $k_p(V)\Ptens k_p(V)$.
Consequently, $(e_i\otimes e_j)_{i,j\in\N}$ is a basis
in $k_p(V)\Ptens k_p(V)$. Since the coefficient functionals
$e_i^*\colon x\mapsto x_i$ on $k_p(V)$ are obviously continuous,
so are the functionals $e_i^*\otimes e_j^*$ on $k_p(V)\Ptens k_p(V)$.
Thus $(e_i\otimes e_j)_{i,j\in\N}$ is a Schauder basis.

Given $u=\sum_{i,j} u_{ij} e_i\otimes e_j\in k_p(V)\Ptens k_p(V)$,
we clearly have $\pi(u)=\sum_i u_{ii} e_i$. Hence
\[
\ker\pi=\overline{\spn}\{ e_i\otimes e_j : i\ne j\}.
\]
Therefore, to complete the proof, it suffices to construct a continuous
linear projection $P$ on $k_p(V)\Ptens k_p(V)$ such that
$P(e_i\otimes e_j)=\delta_{ij} e_i\otimes e_j$ for all $i,j$, where
$\delta_{ij}$ is the Kronecker delta.

Given $n\in\N$, let $\ell_p^0(v_n)$ denote the subspace of $\ell_p(v_n)$
consisting of finite sequences. Consider the bilinear map
\[
B_n\colon\ell_p^0(v_n)\times\ell_p^0(v_n)\to \ell_p(v_n)\Ptens\ell_p(v_n),\qquad
B_n(x,y)=\sum_{j=1}^\infty x_j y_j e_j\otimes e_j.
\]
We claim that $B_n$ is bounded. Indeed, using \cite[Lemma 2.22]{RR}, we obtain
\[
\sum_jx_jy_je_j\otimes e_j=\int_0^1\Big(\sum_jr_j(t)x_je_j\Big)\otimes\Big(\sum_jr_j(t)y_je_j\Big)dt,
\]
where $(r_j)$ are the Rademacher functions on $[0,1]$. Hence
\begin{align*}
\|B_n(x,y)\|_{\ell_p(v_n)\Ptens\ell_p(v_n)}
& \le\sup_{0\le t\le 1}\Big\|\sum_jr_j(t)x_je_j\Big\|_{\ell_p(v_n)}\Big\|\sum_jr_j(t)y_je_j\Big\|_{\ell_p(v_n)} \\
& =\|x\|_{\ell_p(v_n)}\|y\|_{\ell_p(v_n)}.
\end{align*}
Therefore $B_n$ is bounded. Extending $B_n$ by continuity to
$\ell_p(v_n)\times \ell_p(v_n)$ and then linearizing, we obtain
a bounded linear operator $P_n$ on $\ell_p(v_n)\Ptens\ell_p(v_n)$.
Finally, letting $P=\ind_n P_n$, we obtain a continuous linear
operator $P$ on $k_p(V)\Ptens k_p(V)$ with the required properties.
In view of the above remarks, this completes the proof.
\end{proof}

\begin{prop}
\label{middle-step}
Let $1\le p<\infty$ and let $k_p(V)$ be a K\"othe co-echelon algebra.
Suppose that $k_p(V)$ is topologically amenable. Then:
\begin{mycompactenum}
\item $V$ is eventually bounded;
\item the product map $\pi\c k_p(V)\pt k_p(V)\to k_p(V)$ is open, and there is a commutative diagram
\[\begin{tikzcd}
k_p(V)\pt k_p(V)\arrow[swap]{d}{\pi}\arrow{r}{q} & k_p(V)\pt k_p(V)/\ker\pi\arrow[shift left]{dl}{\hat{\pi}} \\
k_p(V)\arrow{ur}{\hat{\pi}^{-1}} &
\end{tikzcd}\]
where $q$ is the quotient map. Moreover,
\begin{equation}
\hat{\pi}^{-1}(a)=\sum_{j=1}^{\infty}a_je_j\otimes e_j+\ker\pi\hspace{25pt}(a\in k_p(V));
\label{inverse-formula}
\end{equation}
\item $k_p(V)$ is nuclear.
\end{mycompactenum}
\end{prop}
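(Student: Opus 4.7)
The plan is to establish (ii) first using topological amenability and the adjoint criterion of Lemma~\ref{adjoint-surjective}, and then to extract (i) and (iii) as continuity consequences of the inverse formula. Write $A := k_p(V)$.

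For (ii), Lemma~\ref{ker_pi} gives that $\ker\pi$ is complemented in $A\pt A$, so $Z := (A\pt A)/\ker\pi$ is a complete DF-space and $\hat\pi\c Z\to A$ is a continuous injection. It has dense range since $\hat\pi(e_j\otimes e_j + \ker\pi) = e_j$ and finite sequences are dense in $A$. By Lemma~\ref{adjoint-surjective} it then suffices to show that the adjoint $\hat\pi'\c A'\to Z'$ is surjective. Via Corollary~\ref{hc-jc}, elements of $Z'$ correspond to continuous diagonal bilinear forms $\phi(x,y) = \sum_j c_j x_j y_j$ on $A\times A$; those in the image of $\hat\pi'$ are precisely the forms with $c_j = f(e_j)$ for some $f\in A'$. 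To show every diagonal $\phi$ is of this form I would apply Theorem~\ref{ext-top-flat} to the admissible sequence $0\to\ker\pi\to A\pt A\to Z\to 0$ in $A\bimod A(\CBDF)$ (admissibility in $\CLCS$ being supplied by Lemma~\ref{ker_pi}), mirroring the Palamodov-style argument in the proof of Theorem~\ref{ext-top-flat}: topological amenability yields exactness of $L_{A^e}(-,X')$ for suitable $X\in A\bimod A(\CBDF)$, and choosing $X$ so that its dual encodes diagonal bilinear forms on $A\times A$ produces the required functional $f$. Once $\hat\pi$ is a topological isomorphism, formula~\eqref{inverse-formula} holds for finite-support $a$ and extends by continuity to all of $A$.

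For (i) and (iii), observe that $D := \overline{\spn}\{e_j\otimes e_j\}$ is a closed complement of $\ker\pi$ in $A\pt A$ (the projection $P$ of Lemma~\ref{ker_pi} has range $D$ and kernel $\ker\pi$), hence $Z\cong D$ topologically and $\hat\pi^{-1}$ becomes a continuous linear map $A\to D\subset A\pt A$, $a\mapsto\sum_j a_j e_j\otimes e_j$. Since $A\pt A = \ind_n(\ell_p(v_n)\pt\ell_p(v_n))$, Grothendieck-type factorization forces this map to factor boundedly through some level $\ell_p(v_n)\pt\ell_p(v_n)$. Standard duality estimates for the projective tensor norm of a diagonal element (which after the isometric identification $\ell_p(v_n)\pt\ell_p(v_n)\cong\ell_p\pt\ell_p$ reduces to the nuclear norm of a diagonal operator) translate this boundedness into a weighted summability of the shape $\|a\|_{\ell_r(v_n^2)}\lesssim\|a\|_{\ell_p(v_m)}$ for some $r=r(p)$, equivalently $(v_n^2/v_m)\in\ell_{r'}$ for each $m$. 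Iterating this inequality against (W1)--(W3) first shows that some $v_n$ is bounded, proving (i); sharpening the iteration yields the $\ell_1$-summability of weight ratios which characterises nuclearity of $k_p(V)$, proving (iii).

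The main obstacle is the surjectivity of $\hat\pi'$: one has to translate the abstract Ext-vanishing consequence of topological amenability into the concrete extension of a prescribed diagonal bilinear form to a functional in $A'$. I anticipate that the correct choice of auxiliary bimodule in Theorem~\ref{ext-top-flat} -- along the lines of tensoring with $\ell_1(S)$ for arbitrary sets $S$, as used in the proof of that theorem -- will be the delicate step.
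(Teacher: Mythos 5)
Your skeleton for (ii) is the right one --- Lemma~\ref{ker_pi} to get a complete quotient, dense range via $e_j\otimes e_j\mapsto e_j$, and Lemma~\ref{adjoint-surjective} to reduce everything to the surjectivity of $\hat\pi'$ --- but that surjectivity is exactly the point of the proposition, and you leave it unproved: you say you will ``choose $X$ so that its dual encodes diagonal bilinear forms'' without exhibiting such a bimodule, and you flag this yourself as the delicate step. No detour through Theorem~\ref{ext-top-flat} or $\ell_1(S)$-tensoring is needed. The paper's argument is short and concrete: given $\psi\in Z'$, put $\psi_0=\psi\circ q$; since $\psi_0$ vanishes on $\ker\pi$ it is diagonal, and the map $\delta\c k_p(V)\to(k_p(V)\otimes\CC)'$, $\la b,\delta(a)\ra:=\psi_0(a\otimes b)$, is a continuous derivation into the dual of the bimodule $k_p(V)\otimes\CC$ (trivial right action, multiplication as left action). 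Theorem~\ref{topam-der} makes $\delta$ inner, $\delta(a)=\phi\cdot a$ for some $\phi\in k_p(V)'$, and then $\hat\pi'(\phi)=\psi$. Identifying this specific bimodule and derivation is the missing idea; once you have it, \eqref{inverse-formula} follows from the Schauder basis as you say.

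The second, more serious problem is that (i) and (iii) simply do not follow from the continuity of $\hat\pi^{-1}$, so your plan to ``extract them as continuity consequences of the inverse formula'' cannot be repaired. For $p=1$ one has $\ell_1(v_n)\Ptens\ell_1(v_n)\cong\ell_1(\N^2,v_n\otimes v_n)$ isometrically, so the projective norm of $\sum_j a_je_j\otimes e_j$ is just $\sum_j|a_j|v_n(j)^2$ and your ``weighted summability'' degenerates to the pointwise bound $v_n^2\le Cv_m$, which carries no information. Concretely: for $V\equiv 1$, $k_1(V)=\ell_1$ satisfies (ii) (it is biprojective) but is not nuclear, so (ii) does not imply (iii); and for $v_n(j)=j^{2^{-n}}$ (which satisfies (W1)--(W3) and has $v_{m+1}^2=v_m$, so the diagonal lifting $\ell_1(v_m)\to\ell_1(v_{m+1})\Ptens\ell_1(v_{m+1})$ is bounded and $\hat\pi$ is again a topological isomorphism) no weight is bounded, so (ii) does not imply (i). These algebras are of course not topologically amenable --- which is the point: openness of $\pi$ is strictly weaker than the hypothesis. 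The paper proves (i) by a different mechanism: if every $v_n$ were unbounded, one picks $j_l\nearrow\infty$ with $v_k(j_l)\ge 1$ for $k\le l$ and gets a continuous dense-range homomorphism $k_p(V)\to\ell_p$, which by Propositions~\ref{am-top-am-banach} and~\ref{top-am-dense-range} would make the Banach algebra $\ell_p$ amenable, a contradiction. Part (iii) is then obtained by repeating the proof of \cite[Theorem 5.1]{KP-amenable}, which uses topological amenability through both (i) and (ii), not the tensor-norm estimate alone.
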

\begin{proof}
(i) Suppose towards a contradiction that all the weights $v_n$
are unbounded.
This implies that there is a sequence $j_l\nearrow\infty$ such that
$v_k(j_l)\ge1$ for all $l\in\N$ and all $k\le l$. Define a dense range homomorphism
\[
\theta\c k_p(V)\to\ell_p,\quad\theta(a):=(a_{j_l})_{l\in\N},
\]
where we consider $\ell_p$ with the coordinate-wise multiplication. For every $k\in\N$ we get
\[
\|\theta(a)\|_{\ell_p}^p=\sum_{l\le k}|a_{j_l}|^p+\sum_{l>k}|a_{j_l}|^p\le C_k\|a\|_{k,p}^p
\]
with $C_k:=\max\{1/v_k(j_l)^p\c\,l\le k\}+1$. Consequently, $\theta$ indeed takes
$k_p(V)$ to $\ell_p$ and is continuous.
Since $k_p(V)$ is topologically amenable, it follows from
Propositions~\ref{am-top-am-banach} and \ref{top-am-dense-range} that the Banach algebra $\ell_p$ is amenable. This leads to a contradiction since $\ell_p$ is known to be
non-amenable (see, e.g., \cite[Example 4.1.42(iii)]{D}).
Therefore $V$ is eventually bounded.

(ii) To prove that $\pi$ is open, it suffices to show that $\hat\pi$ is a topological isomorphism.
Taking into account Lemma~\ref{ker_pi}, we see that $\hat\pi$ acts between complete
barrelled DF-spaces and, clearly, has dense range. By Lemma~\ref{adjoint-surjective},
the proof will be complete if we show that $\hat\pi'$ is surjective. Towards this goal,
take $\psi\in (k_p(V)\Ptens k_p(V)/\ker\pi)'$ and let $\psi_0=\psi\circ q$.
Since $\psi_0$ vanishes on $\ker\pi$, we have
\begin{equation}
\psi_0(a\otimes b)=\sum_{j=1}^{\infty}a_jb_j\psi_0(e_j\otimes e_j)\hspace{25pt}(a,b\in k_p(V)).
\label{derivation}
\end{equation}
Define now a linear map
\[\delta\c k_p(V)\to(k_p(V)\otimes\CC)',\qquad\langle b,\delta(a)\rangle:=\psi_0(a\otimes b).\]
In other words, $\delta$ is the image of $\psi_0$ under \eqref{adjass} (where $X=Y=k_p(V)$
and $Z=\CC$). Hence $\delta$ is continuous.
Using \eqref{derivation}, we see that
\[\la c,\delta(ab)\ra=\la ab\otimes c,\psi_0\ra
=\la a\otimes bc,\psi_0\ra=\la c,\delta(a)\cdot b\ra\hspace{25pt}(a,b,c\in k_p(V)).\]
Since the left action of $k_p(V)$ on $(k_p(V)\otimes\CC)'$ is trivial, we conclude that
$\delta$ is a derivation. By Theorem \ref{topam-der}, there is $\phi\in(k_p(V))'$ such that
\[\delta(a)=\phi\cdot a\hspace{25pt}(a\in k_p(V)).\]
Hence for all $a,b\in k_p(V)$ we have
\[
\la a\otimes b+\ker\pi,\hat{\pi}'(\phi)\ra
=\la ab,\phi\ra
=\la b,\phi\cdot a\ra
=\la b,\delta(a)\ra
=\la a\otimes b,\psi_0\ra
=\la a\otimes b+\ker\pi,\psi\ra,
\]
that is, $\hat{\pi}'(\phi)=\psi$. Therefore the map $\hat{\pi}'$ is surjective.
In view of the above remarks, this implies that $\pi$ is open.
To prove \eqref{inverse-formula}, observe that for every $j\in\N$ we have
\[\hat{\pi}^{-1}(e_j)=\hat{\pi}^{-1}\circ\hat{\pi}(e_j\otimes e_j+\ker\pi)=e_j\otimes e_j+\ker\pi.\]
Since $(e_j)_{j\in\N}$ is a Schauder basis in $k_p(V)$, this implies \eqref{inverse-formula}.

(iii) To get the nuclearity of $k_p(V)$ we repeat exactly the proof of \cite[Theorem 5.1]{KP-amenable}.
We can indeed do so, since $\hat{\pi}^{-1}$ is a topological isomorphism not only in the case of amenability (which was the assumption in \cite{KP-amenable}) but also under the weaker assumption of topological amenability.
\end{proof}

\begin{theorem}
\label{finite-order}
Let $1\le p<\infty$, and let $k_p(V)$ be a K\"othe co-echelon algebra.
TFAE:
\begin{mycompactenum}
\item $k_p(V)$ is topologically amenable;
\item $k_p(V)$ is amenable;
\item $k_p(V)$ is contractible;
\item $k_p(V)$ is unital;
\item $V$ is eventually in $\ell_1$;
\item $V$ is eventually bounded, and $k_p(V)$ is nuclear.
\end{mycompactenum}
\end{theorem}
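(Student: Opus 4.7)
The plan is to argue via the cycle
\[
\mathrm{(iii)} \Rightarrow \mathrm{(ii)} \Rightarrow \mathrm{(i)} \Rightarrow \mathrm{(vi)} \Leftrightarrow \mathrm{(v)} \Leftrightarrow \mathrm{(iv)} \Rightarrow \mathrm{(iii)},
\]
which threads the new condition (i) through the already-known block. The three conditions (iv), (v), (vi) are mutually equivalent by \cite[Proposition 2.5]{BonDom}, as recalled in the Preliminaries, so I would quote that block once and treat it as a single node. For $\mathrm{(iii)} \Rightarrow \mathrm{(ii)}$ I would observe that the retract-of-a-free-module argument used in Example \ref{example:proj_topflat} also yields ordinary exactness in $\Vect$, so projectivity of $A_+$ implies flatness. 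For $\mathrm{(ii)} \Rightarrow \mathrm{(i)}$ I would invoke Corollary \ref{am-top-am}, which applies because $k_p(V)$ is complete, barrelled, and DF. Finally, $\mathrm{(i)} \Rightarrow \mathrm{(vi)}$ is precisely the content of parts (i) and (iii) of Proposition \ref{middle-step}.

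This leaves only $\mathrm{(iv)} \Rightarrow \mathrm{(iii)}$: that unital K\"othe co-echelon algebras are contractible. The quickest route is to cite \cite{KP-contractible}, where this has already been established. For a self-contained proof one may exhibit a central diagonal directly. Assuming unitality, the identity is $\mathbf{1}=\sum_j e_j \in k_p(V)$; by (v) some $v_n$ lies in $\ell_1$, and by (vi) the same $v_n$ is bounded, so $\sum_j v_n(j)^2 \le \|v_n\|_\infty \|v_n\|_1 < \infty$. Since the projective tensor norm of basis tensors is $\|e_j\otimes e_j\|_\pi = v_n(j)^2$, the series $\rho := \sum_j e_j\otimes e_j$ converges absolutely in $\ell_p(v_n)\Ptens\ell_p(v_n)$ and therefore defines an element of $k_p(V)\Ptens k_p(V)$. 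A short computation gives $\pi(\rho)=\mathbf{1}$ and $a\cdot\rho = \sum_j a_j e_j\otimes e_j = \rho\cdot a$ for every $a\in k_p(V)$, so $a\mapsto a\cdot\rho$ is an $A$-bimodule section of $\pi$, which is the standard criterion for contractibility.

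The real analytic work has already been invested in Proposition \ref{middle-step} (delivering $\mathrm{(i)}\Rightarrow\mathrm{(vi)}$), whose proof rested on Lemma \ref{adjoint-surjective} and a careful analysis of the adjoint of $\hat\pi$; once that proposition is in hand, the rest of the cycle is assembly. The only subtlety requiring verification in the direct diagonal argument is the absolute convergence of $\rho$ in a single inductive step $\ell_p(v_n)\Ptens\ell_p(v_n)$, which is handled by the interaction of eventual boundedness with eventual $\ell_1$-summability of $V$; otherwise the remaining implications are either routine or drawn from \cite{BonDom, KP-contractible}.
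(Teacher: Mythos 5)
Your proof is correct, and its two genuinely new arrows coincide with the paper's: $\mathrm{(ii)}\Rightarrow\mathrm{(i)}$ via Corollary~\ref{am-top-am} and $\mathrm{(i)}\Rightarrow\mathrm{(vi)}$ via Proposition~\ref{middle-step}, with the block $\mathrm{(iv)}\Leftrightarrow\mathrm{(v)}\Leftrightarrow\mathrm{(vi)}$ quoted from \cite[Proposition 2.5]{BonDom} in both cases. Where you diverge is in the handling of the previously known equivalences: the paper simply cites \cite[Theorem 5.1]{KP-amenable} for $\mathrm{(ii)}\Leftrightarrow\mathrm{(iii)}\Leftrightarrow\mathrm{(iv)}$ and closes the loop that way, whereas you arrange everything into a single directed cycle and supply your own arguments for $\mathrm{(iii)}\Rightarrow\mathrm{(ii)}$ (projective implies flat, which is indeed immediate since split exactness in $\CLCS$ gives exactness in $\Vect$) and for $\mathrm{(iv)}\Rightarrow\mathrm{(iii)}$ (the explicit diagonal $\rho=\sum_j e_j\otimes e_j$). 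Your diagonal argument is sound: the absolute convergence estimate $\sum_j v_n(j)^2\le\|v_n\|_\infty\|v_n\|_1$ is valid once one notes, using (W2), that a single $n$ can be chosen with $v_n\in\ell_1\cap\ell_\infty$, and the identities $\pi(\rho)=\mathbf{1}$, $a\cdot\rho=\rho\cdot a$ follow by continuity. The one point you leave tacit is the reduction from ``$A_+$ is projective'' (the paper's definition of contractibility) to the existence of a diagonal for the unital algebra $A$ itself; this is the standard fact that for unital $A$ the bimodule $A_+$ splits as $A\oplus\CC_0$ with $\CC_0$ projective, so it is not a gap, but it deserves a citation (e.g., to \cite{H3}). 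The trade-off is that the paper's route is shorter and leans on prior work, while yours makes the $\mathrm{(iv)}\Rightarrow\mathrm{(iii)}$ direction self-contained at the cost of a small amount of explicit tensor-norm bookkeeping.
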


\begin{proof}
$\mathrm{(ii)\Leftrightarrow (iii)\Leftrightarrow (iv)}$: see \cite[Theorem 5.1]{KP-amenable}.

$\mathrm{(iv)\Leftrightarrow (v)\Leftrightarrow (vi)}$: see \cite[Proposition 2.5]{BonDom}.

$\mathrm{(ii)\Rightarrow (i)}$ follows from Corollary \ref{am-top-am}.

$\mathrm{(i)\Rightarrow (vi)}$ follows from Proposition \ref{middle-step}.
\end{proof}

It turns out that the cases of K\"othe co-echelon algebras of order zero and infinity can be treated simultaneously.

\begin{theorem}
\label{thm:kinf}
Let $p\in\{0,\infty\}$, and let $k_p(V)$ be a K\"othe co-echelon algebra. TFAE:
\begin{mycompactenum}
\item $k_0(V)$ is topologically amenable;
\item $k_{\infty}(V)$ is topologically amenable;
\item $V$ is eventually bounded.
\end{mycompactenum}
\end{theorem}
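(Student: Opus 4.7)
The plan is to prove the two biconditionals (i)$\Leftrightarrow$(iii) and (ii)$\Leftrightarrow$(iii) separately; the equivalence (i)$\Leftrightarrow$(ii) then comes for free. In both halves of each biconditional the organising principle is to transport topological amenability along continuous dense-range homomorphisms via Proposition~\ref{top-am-dense-range}, and to switch between topological and Johnson amenability on the Banach side via Proposition~\ref{am-top-am-banach}.

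For (iii)$\Rightarrow$(ii) I would use the natural inclusion $\iota\c\ell_\infty\to k_\infty(V)$. Since some $v_{n_0}$ is bounded, (W2) makes $\ell_\infty\subset\ell_\infty(v_n)$ a continuous inclusion for every $n\ge n_0$, so $\iota$ is a well-defined continuous homomorphism of algebras. To see that it has dense range, for $a\in\ell_\infty(v_n)$ I would consider the value-truncations $a^{(N)}:=a\cdot\mathbf{1}_{|a|\le N}\in\ell_\infty$. Using (W3) to pick $m$ with $v_m\le Cv_n^2$ and observing that $|a_j|>N$ forces $v_n(j)<\|a\|_n/N$, one obtains
\[
|a_j-a^{(N)}_j|\,v_m(j) = |a_j|v_n(j)\cdot\frac{v_m(j)}{v_n(j)} \le \|a\|_n\cdot Cv_n(j) \le \frac{C\|a\|_n^2}{N},
\]
so $a^{(N)}\to a$ in $\ell_\infty(v_m)\subset k_\infty(V)$. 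Since $\ell_\infty$ is amenable as a commutative $C^*$-algebra, Propositions~\ref{am-top-am-banach} and~\ref{top-am-dense-range} deliver (ii). The direction (iii)$\Rightarrow$(i) is parallel but easier: when $V$ is eventually bounded, $c_0$ embeds continuously into $k_0(V)$, and finite sequences are dense in $k_0(V)$ by the standard index-truncation in each $c_0(v_n)$; since $c_0$ is an amenable commutative $C^*$-algebra, the same transfer argument applied through the completion $\widetilde{k_0(V)}$ yields (i).

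For the converse directions (ii)$\Rightarrow$(iii) and (i)$\Rightarrow$(iii) I would argue by contradiction, adapting the diagonal strategy of Proposition~\ref{middle-step}(i) but aiming at a non-amenable Banach target of the right flavour. Assume $V$ is not eventually bounded, so every $v_l$ is unbounded, and pick distinct $j_l\in I$ with $v_l(j_l)\ge l^2$; (W2) then forces $v_n(j_l)\ge l^2$ for all $l\ge n$. Fix $q\in(1,\infty)$. The formula $\theta(a):=(a_{j_l})_{l\in\N}$ defines an algebra homomorphism $k_\infty(V)\to\ell_q$ (with coordinate-wise multiplication on $\ell_q$): the estimate $|a_{j_l}|\le\|a\|_n/l^2$ for $l\ge n$ makes it continuous. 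Its image contains every $e_l=\theta(e_{j_l})$, so $\theta$ has dense range, and the same is true of its restriction to $k_0(V)$ since $e_{j_l}\in k_0(V)$. Topological amenability of either $k_\infty(V)$ or $k_0(V)$ would then force $\ell_q$ to be topologically amenable, hence amenable as a Banach algebra by Proposition~\ref{am-top-am-banach}. But $\ell_q$ under coordinate-wise multiplication has no bounded approximate identity for $q<\infty$ (any such BAI would converge to $1$ coordinate-wise, forcing unbounded $\ell_q$-norms), and is therefore non-amenable, a contradiction.

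The main obstacle I expect is the density claim in (iii)$\Rightarrow$(ii): a bounded sequence need not approximate an element of $k_\infty(V)$ in any single step $\ell_\infty(v_n)$, so one must genuinely move to a larger step $\ell_\infty(v_m)$, and exactly this move is powered by the quadratic domination in (W3). Once that density is secured, everything else reduces to clean applications of the transfer principles from the preceding section together with the classical amenability status of $\ell_\infty$, $c_0$, and $\ell_q$.
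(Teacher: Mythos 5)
Your proof is correct and follows essentially the same route as the paper: the same transfer via Propositions~\ref{am-top-am-banach} and~\ref{top-am-dense-range}, the same inclusions $\ell_\infty\hookrightarrow k_\infty(V)$ and $c_0\hookrightarrow k_0(V)$ with a (W3)-powered truncation argument for density, and the same diagonal contradiction when $V$ is not eventually bounded. The only differences are cosmetic: you truncate by the size of $|a_j|$ where the paper truncates by the size of $v_n(j)$ (equivalent cutoffs), and you map onto $\ell_q$ with $1<q<\infty$ and polynomial growth $l^2$ where the paper uses $\ell_1$ with geometric growth $2^l$, supplying your own no-bounded-approximate-identity argument in place of the citation to Dales.
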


\begin{proof}
$\mathrm{(ii)\Rightarrow (iii)}$. If $k_{\infty}(V)$ is topologically amenable then we can follow the proof of Proposition \ref{middle-step} to show that $V$ is eventually bounded.
Indeed,
suppose towards a contradiction that all the weights $v_n$
are unbounded.
This implies that there is a sequence $j_l\nearrow\infty$ such that
$v_k(j_l)\ge 2^l$ for all $l\in\N$ and all $k\le l$. Define a dense range homomorphism
\[
\theta\c k_\infty(V)\to\ell_1,\quad\theta(a):=(a_{j_l})_{l\in\N},
\]
where we consider $\ell_1$ with the coordinate-wise multiplication. For every $k\in\N$ we get
\[
\|\theta(a)\|_{\ell_1}=\sum_{l\le k}|a_{j_l}|+\sum_{l>k}|a_{j_l}|\le C_k\|a\|_{k,\infty}
\]
with $C_k:=\sum_{l\le k} (1/v_k(j_l))+1$. Consequently, $\theta$ indeed takes
$k_\infty(V)$ to $\ell_1$ and is continuous.
Since $k_\infty(V)$ is topologically amenable, it follows from
Propositions~\ref{am-top-am-banach} and \ref{top-am-dense-range} that the Banach algebra $\ell_1$ is amenable. This leads to a contradiction since $\ell_1$ is known to be
non-amenable (see, e.g., \cite[Example 4.1.42(iii)]{D}).
Therefore $V$ is eventually bounded.

$\mathrm{(iii)\Rightarrow (ii)}$.
Without loss of generality, we may assume that $v_1\in V$ is bounded.
We then have $\ell_\infty\subset\ell_\infty(v_1)$, and the inclusion is clearly bounded.
Composing with the inclusion of $\ell_\infty(v_1)$ into $k_\infty(V)$, we obtain
a continuous homomorphism
\begin{equation}
\theta\c\ell_{\infty}\to k_{\infty}(V),\quad \theta(a):=a.
\label{infinite-order-density}
\end{equation}
We claim that $\theta$ has dense range. To this end, let $a\in k_{\infty}(V)\setminus\{ 0\}$, i.e.,
$0<\|a\|_{n,\infty}<\infty$ for some $n\in\N$. Using (W3),
find $m\in\N$ and $C>0$ such that
\[
\forall\,j\in\N\quad v_m(j)\le C v_n(j)^2.
\]
Fix $\ve>0$ and denote $J_1:=\{j\in\N\c\,v_n(j)\ge\frac{\ve}{2C\|a\|_{n,\infty}}\}$ and $J_2:=\N\setminus J_1$. Define a scalar sequence $b^{\ve}=(b_j)_j$ as
\[b_j:=\begin{cases}
a_j,\,\,j\in J_1 \\
0,\,\,\,j\in J_2.
\end{cases}\]
For each $j\in J_1$ we have
\[|b_j|\le\frac{2C}{\ve}\|a\|_{n,\infty}|a_j|v_n(j)\le\frac{2C}{\ve}\|a\|_{n,\infty}^2.\]
Consequently, $b^{\ve}\in\ell_{\infty}$ with $\|b^{\ve}\|_{\ell_{\infty}}\le \frac{2C}{\ve}\|a\|_{n,\infty}^2$.
If $J_2$ is empty, we conclude that $a=b^\ve$ is in the range of $\theta$.
Otherwise observe that
\[\|a-b^{\ve}\|_{m,\infty}=\sup_{j\in J_2}|a_j|v_m(j).\]
For any $j\in J_2$ we get
\begin{align*}
|a_j|v_m(j)\le C|a_j|v_n(j)^2 & \le C\|a\|_{n,\infty}v_n(j) \\
& <C\|a\|_{n,\infty}\frac{\ve}{2C\|a\|_{n,\infty}}=\frac{\ve}{2}<\ve.
\end{align*}
Thus $\|a-b^{\ve}\|_{m,\infty}<\ve$. This implies that for a sequence $\ve_k\searrow0$ we get another sequence $b^k:=b^{\ve_k}\in\ell_{\infty}$ such that
\[\lim_{k\to\infty}b^k=a\quad\text{in}\,\,\ell_{\infty}(v_m).\]
But the topology of $\ell_{\infty}(v_m)$ is stronger than that of $k_{\infty}(V)$, thus
\[\lim_{k\to\infty}b^k=a\quad\text{in}\,\,k_{\infty}(V).\]
Consequently, the homomorphism \eqref{infinite-order-density} has dense range.
Since $\ell_\infty$ is amenable by \cite[Lemma 7.10]{BEJ} (see also \cite[Theorem 5.6.2]{D},
\cite[Theorem VII.2.42]{H2}), the topological amenability of $k_\infty(V)$
now follows from Propositions \ref{am-top-am-banach} and \ref{top-am-dense-range}.

$\mathrm{(i)\Leftrightarrow(iii)}$.
This part is even easier since $(e_j)_{j\in\N}$ is a common Schauder basis for both
$c_0$ and $k_0(V)$, thus the density of the range of $\theta$
in \eqref{infinite-order-density} is immediate.
\end{proof}

\section{Examples}
\label{sect:examples}

Let us now give some concrete examples which illustrate
Theorems~\ref{finite-order} and~\ref{thm:kinf}.

\begin{example}
\label{example:phi_amen}
Applying Theorem \ref{finite-order}, we see that the algebra $\phi$ of finite
sequences (see Example~\ref{example:phi}) is not topologically amenable.
\end{example}

\begin{example}
\label{example:dual_power_amen}
Consider the dual power series space $D\Lambda_R^p(\alpha)$, where $1\le p\le\infty$
and $R\in\{ 0\}\cup [1,+\infty)$ (see Example~\ref{example:dual_power}).
If $R\ge 1$, then the respective weights $(r^{\alpha_j})_{j\in\N}$
are clearly unbounded for all $r>R$, so $D\Lambda_R^p(\alpha)$ is not topologically amenable
in this case (see Theorems~\ref{finite-order} and~\ref{thm:kinf}).
On the other hand, $(r^{\alpha_j})_{j\in\N}$ is bounded for each $0<r\le 1$,
and so $D\Lambda_0^\infty(\alpha)$ is topologically amenable by Theorem~\ref{thm:kinf}.

In fact, more is true. Indeed, all dual power series spaces $D\Lambda_R^p(\alpha)$
are Schwartz spaces by \cite[Theorem 4.9]{BMS}.
Also, it is clear that $D\Lambda_0^\infty(\alpha)$ is unital.
Now \cite[Theorem 12]{KP-contractible} implies that $D\Lambda_0^\infty(\alpha)$ is contractible.

Finally, if $p<\infty$, then $D\Lambda_0^p(\alpha)$ is topologically amenable iff it is
contractible iff $\sum_j r^{\alpha_j}<\infty$ for some $r>0$ (see Theorem~\ref{finite-order}).
\end{example}

\begin{example}
The algebra $s'$ of sequences of polynomial growth is contractible.
This follows from \cite[Proposition 7.3]{JLT_fmwk} and is explicitly
mentioned in \cite[Example 3.1]{Pir_Oulu}, \cite[Example 6.6]{ZL}.
Since $s'=D\Lambda_0^p(\alpha)$, where $\alpha_j=\log j$
(see Example~\ref{example:s'}), we see that the contractibility of $s'$
is also a special case of Example~\ref{example:dual_power_amen}.
\end{example}

\begin{example}
\label{example:germs_amen}
As another special case of Example~\ref{example:dual_power_amen}, we see that
the Hadamard algebra $\cH(\ol{\DD}_R)$ of germs of holomorphic
functions on the disc $\ol{\DD}_R$
(see Example~\ref{example:germs}) is not topologically amenable for $R\ge 1$.
On the other hand, letting $R=0$, we see that the Hadamard algebra
$\cH_0$ of holomorphic germs at zero is contractible.
\end{example}

The reader may have noticed that for all the algebras mentioned in
Examples~\ref{example:phi_amen}--\ref{example:germs_amen}
topological amenability is equivalent to contractibility.
On the other hand, there are two obvious examples of topologically amenable co-echelon
algebras that are not contractible --- namely, $c_0$ and $\ell_\infty$.
To construct more examples of the same kind, let us first observe that the direct
sum of two co-echelon algebras of the same order is also a co-echelon algebra.
More exactly, if $V=(v_n)_{n\in\N}$ and $W=(w_n)_{n\in\N}$ are sequences of weights
on index sets $I$ and $J$, respectively, then we have
$k_p(I,V)\mathop{\oplus} k_p(J,W)\cong k_p(I\sqcup J,U)$,
where the sequence $U=(u_n)_{n\in\N}$ of weights on $I\sqcup J$ is given by
$u_n(i)=v_n(i)$ if $i\in I$, and $u_n(j)=w_n(j)$ if $j\in J$.
Conversely, each partition $I=S\sqcup T$ induces a direct
sum decomposition $k_p(I,V)\cong k_p(S,V_S)\mathop{\oplus} k_p(T,V_T)$,
where $V_S$ and $V_T$ consist of the restrictions to $S$ and $T$ of weights from $V$.

\begin{example}
\label{example:dirsum}
Let $A_1=c_0\mathop{\oplus} D\Lambda_0^\infty(\alpha)$
and $A_2=\ell_\infty\mathop{\oplus} D\Lambda_0^\infty(\alpha)$.
In view of the above discussion,
$A_1$ and $A_2$ are co-echelon algebras of order $0$ and $\infty$, respectively.
By Theorem~\ref{thm:kinf}, $A_1$ and $A_2$ are topologically amenable.
On the other hand, $A_1$ and $A_2$ are not Montel spaces, so they are not
contractible by \cite[Theorems 12 and 13]{KP-contractible}
(moreover, $A_1$ is not unital, which already implies that it is not contractible).
\end{example}

Of course, the above example is degenerate in a sense. Our next goal is to
construct a ``genuine'' example of a co-echelon algebra of order $\infty$ which is
topologically amenable and unital, but is not contractible.
By ``genuine'' we mean that the algebra we are going to construct
is not reduced to a direct sum of $\ell_\infty$ with a contractible
algebra of the form $k_\infty(V)$ in the sense explained before Example~\ref{example:dirsum}.

\begin{example}
\label{example:NN}
We fix a sequence $(c_j)_{j\in\N}$ of positive numbers such that $c_j\le 1$ for all $j$,
and such that $c_j\to 0$ as $j\to\infty$. For each $n\in\N$ define a weight $v_n$ on $\N^2$ by
\begin{equation}
\label{v_ij}
v_n(i,j)=
\begin{cases}
c_j^n, & i<n,\\
1, & i\ge n.
\end{cases}
\end{equation}
Clearly, the sequence $V=(v_n)_{n\in\N}$ satisfies (W1) and (W2). Furthermore, we have
$v_{2n}\le v_n^2$ for all $n\in\N$, whence $V$ satisfies (W3).
Thus $k_p(\N^2,V)$ is a K\"othe co-echelon algebra for all $p$.
Since $V$ is eventually bounded, we see that $k_0(\N^2,V)$ and
$k_\infty(\N^2,V)$ are topologically amenable (see Theorem~\ref{thm:kinf}).
Moreover, $k_\infty(\N^2,V)$ is clearly unital.
\end{example}

For each $i\in\N$, let $L_i=\{ (i,j) : j\in\N\}\subset\N^2$.

\begin{lemma}
\label{lemma:fin_inter}
If $S\subset\N^2$, then $k_\infty(S,V_S)$ is a Banach space if and only if
$S\cap L_n$ is finite for all $n\in\N$.
\end{lemma}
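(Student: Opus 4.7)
The plan is to compare each Banach space $U_n := \ell_\infty(S,v_n|_S)$ with $\ell_\infty(S)$ and to invoke Grothendieck's factorization theorem for the hard direction.

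For sufficiency, I assume $S \cap L_n$ is finite for every $n$. Then for each fixed $n$ the set $\{(i,j) \in S : i < n\} = \bigcup_{i < n}(S \cap L_i)$ is finite, so by \eqref{v_ij} the weight $v_n$ takes only finitely many positive values on this exceptional set and is therefore bounded below there by some $\delta_n > 0$, while on the rest of $S$ it is identically $1$. Combined with $v_n \le 1$, this makes $\|\cdot\|_{v_n}$ and $\|\cdot\|_\infty$ equivalent on $U_n$, so each $U_n$ coincides topologically with $\ell_\infty(S)$. Hence the inductive limit $k_\infty(S,V_S) = \bigcup_n U_n$ equals $\ell_\infty(S)$ as a locally convex space and is a Banach space.

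For necessity, I assume $k_\infty(S,V_S)$ is a Banach space, hence Baire, and apply Grothendieck's factorization theorem to the identity map viewed as a continuous linear operator from this Banach space to $\ind_n U_n$. This yields an index $n_0$ with $k_\infty(S,V_S) \subset U_{n_0}$, forcing equality as sets. Then for every $m \ge n_0$ the continuous inclusions $U_{n_0} \hookrightarrow U_m \hookrightarrow k_\infty(S,V_S) = U_{n_0}$ exhibit $U_{n_0} \hookrightarrow U_m$ as a continuous linear bijection between Banach spaces, hence a topological isomorphism by the Open Mapping Theorem. Evaluating the resulting norm equivalence on the standard basis vectors $e_{(i,j)}$, $(i,j) \in S$, gives a constant $C_m$ with $v_{n_0}(i,j) \le C_m\, v_m(i,j)$ for all $(i,j) \in S$.

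It remains to extract the desired finiteness. Fix any row index $i$ and pick $m > \max(i, n_0)$. A direct case analysis on \eqref{v_ij} shows that for $(i,j) \in S$ the ratio $v_{n_0}(i,j)/v_m(i,j)$ equals $c_j^{-(m-n_0)}$ when $i < n_0$ and $c_j^{-m}$ when $n_0 \le i < m$; in both cases it has the form $c_j^{-k}$ with $k \ge 1$. The pointwise bound $v_{n_0} \le C_m v_m$ then reads $c_j^{-k} \le C_m$ along the row $\{j : (i,j) \in S\}$, and since $c_j \to 0$ this row is finite, i.e., $S \cap L_i$ is finite. The main obstacle is this last case analysis: one must confirm that for every $i < m$ the ratio $v_{n_0}/v_m$ really does involve a strictly negative power of $c_j$, so that the hypothesis $c_j \to 0$ can be exploited; the trichotomy dictated by \eqref{v_ij} makes this routine but needs care.
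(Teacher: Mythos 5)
Your proof is correct, and the sufficiency half is essentially the paper's: in both cases one observes that when every $S\cap L_n$ is finite the weight $v_n$ differs from a constant only on a finite subset of $S$, so the step spaces are pairwise (indeed uniformly) isomorphic. The necessity half, however, follows a genuinely different route. The paper quotes the stabilization criterion for (LB)-spaces (an inductive limit of Banach spaces with injective linking maps is Banach iff the linking maps are eventually topological isomorphisms, \cite[19.5.(4)]{K1}) and argues by contraposition: if some $S\cap L_k$ is infinite, then on that row $v_n/v_{n+1}=c_j^{-1}$ is unbounded, so no consecutive inclusion $\ell_\infty(S,v_n)\to\ell_\infty(S,v_{n+1})$ is an isomorphism. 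You instead argue directly from Banachness via Grothendieck's factorization theorem (to locate an $n_0$ with $k_\infty(S,V_S)=U_{n_0}$) plus the Open Mapping Theorem (to get the pointwise bounds $v_{n_0}\le C_m v_m$ on basis vectors), and then run the case analysis on \eqref{v_ij} row by row, choosing $m>\max(i,n_0)$. The two arguments are morally equivalent --- the stabilization criterion the paper cites is itself proved by a Baire-category/factorization argument --- but yours is more self-contained, at the cost of the extra bookkeeping in the final trichotomy (which you carry out correctly: the exponent of $c_j$ in $v_{n_0}/v_m$ is indeed strictly negative in every relevant case, so $c_j\to 0$ forces each row to be finite), while the paper's contrapositive only needs to inspect a single fixed infinite row and a single pair of consecutive weights. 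One cosmetic remark: once you have continuous inclusions $U_{n_0}\hookrightarrow U_m\hookrightarrow U_{n_0}$ that are both restrictions of the identity, the topologies already coincide, so the appeal to the Open Mapping Theorem is harmless but not needed.
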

\begin{proof}
We will use the well-known fact that an (LB)-space
$E=\ind_n E_n$ (where $E_n$ are Banach spaces, and $E_n\to E_{n+1}$
are bounded linear injections) is a Banach space if and only if the sequence $(E_n)$
stabilizes in the sense that there exists $N\in\N$ such that $E_n\to E_{n+1}$
is a topological isomorphism for all $n\ge N$ (this follows, for example, from
\cite[19.5.(4)]{K1}).

If $S\cap L_n$ is finite for all $n\in\N$, then so is
$S_n=\bigcup_{k\le n} (S\cap L_k)$.
We clearly have $v_n=v_{n+1}=1$ outside $S_n$. Letting
\[
C_n=\max_{(i,j)\in S_n} \frac{v_n(i,j)}{v_{n+1}(i,j)},
\]
we obtain
$v_n\le C_n v_{n+1}$ everywhere on $S$.
This readily implies that $\ell_\infty(S,v_n)\to\ell_\infty(S,v_{n+1})$ is a topological
isomorphism. Hence $k_\infty(S,V_S)$ is a Banach space.

Conversely, suppose that $S\cap L_k$ is infinite for some $k$.
Since for each $n\ge k+1$ we have $v_n(k,j)=c_j^n$, and since $c_j\to 0$ as $j\to\infty$,
we see that there is no $C>0$ such that $v_n\le C v_{n+1}$ on $S\cap L_k$.
Therefore $\ell_\infty(S,v_n)\to\ell_\infty(S,v_{n+1})$ is not a topological isomorphism,
and so $k_\infty(S,V_S)$ is not a Banach space.
\end{proof}

\begin{lemma}
\label{lemma:no_decomp}
There is no decomposition $\N^2=S\sqcup T$ such that $k_\infty(S,V_S)$
is a Banach space and such that $k_\infty(T,V_T)$ is a Montel space.
\end{lemma}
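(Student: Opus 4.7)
Here is my plan. I will argue by contradiction, producing a continuous linear surjection $\phi\colon k_\infty(T,V_T)\to\ell_\infty$ which admits a continuous right inverse; since $\ell_\infty$ is a non-Montel Banach space and the continuous image of a bounded set in a Montel space is relatively compact, this will yield the desired contradiction.

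Starting from a putative decomposition $\N^2=S\sqcup T$ with $k_\infty(S,V_S)$ a Banach space and $k_\infty(T,V_T)$ Montel, Lemma~\ref{lemma:fin_inter} forces $S\cap L_n$ to be finite for every $n$, so $T\cap L_n$ is cofinite in $L_n$ for every $n\in\N$. For each $i\in\N$ I then pick some $j_i\in\N$ with $(i,j_i)\in T$. I would introduce the coordinate projection $\phi\colon k_\infty(T,V_T)\to\ell_\infty$, $\phi(a):=(a_{(i,j_i)})_{i\in\N}$, together with the map $\sigma\colon\ell_\infty\to k_\infty(T,V_T)$ sending $b$ to the sequence on $T$ equal to $b_i$ at $(i,j_i)$ and $0$ elsewhere.

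The two key verifications are the continuity of $\phi$ and of $\sigma$. For $\phi$, by \eqref{v_ij} we have $v_n(i,j_i)=1$ whenever $i\ge n$, so
\[
\inf_{i\in\N} v_n(i,j_i)=\min\bigl\{1,\,c_{j_1}^n,\,\dots,\,c_{j_{n-1}}^n\bigr\}>0;
\]
combined with the trivial bound $|a_{(i,j_i)}|\le\|a\|_n/v_n(i,j_i)$ this shows that $\phi\colon\ell_\infty(T,v_n|_T)\to\ell_\infty$ is bounded for every $n$, hence $\phi$ is continuous on the inductive limit $k_\infty(T,V_T)$. For $\sigma$, the assumption $c_j\le 1$ gives $v_n\le 1$ on all of $\N^2$, whence $\|\sigma(b)\|_{\ell_\infty(T,v_1|_T)}\le\|b\|_{\ell_\infty}$, so $\sigma$ maps $\ell_\infty$ continuously into $\ell_\infty(T,v_1|_T)\subset k_\infty(T,V_T)$, and a direct check gives $\phi(\sigma(b))=b$ for all $b$. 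If $k_\infty(T,V_T)$ were Montel, the bounded set $\sigma(B_{\ell_\infty})\subset\ell_\infty(T,v_1|_T)$ would be mapped by $\phi$ to a relatively compact subset of $\ell_\infty$; but $\phi(\sigma(B_{\ell_\infty}))=B_{\ell_\infty}$ is not relatively compact, a contradiction. The main (essentially only) obstacle is the bookkeeping in verifying continuity of $\phi$, i.e., confirming that $\inf_{i\in\N}v_n(i,j_i)$ remains strictly positive for each fixed $n$; once that is in hand, everything else is immediate from the choice of the weights in \eqref{v_ij}.
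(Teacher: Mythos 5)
Your argument is correct, and its combinatorial core coincides with the paper's: Lemma~\ref{lemma:fin_inter} forces $S\cap L_i$ to be finite for every $i$, so one can pick $(i,j_i)\in T$ for each $i$, and the whole point of the weights \eqref{v_ij} is that $v_n(i,j_i)=1$ for $i\ge n$, whence $\inf_i v_n(i,j_i)=\min\{1,c_{j_1}^n,\dots,c_{j_{n-1}}^n\}>0$ for each fixed $n$. Where you genuinely diverge is in how this positivity is converted into non-Montelness of $k_\infty(T,V_T)$. The paper observes that $R=\{(i,j_i):i\in\N\}$ is an infinite subset of $T$ on which $\inf v_m/v_1>0$ for all $m$ (note $v_1\equiv 1$), and then invokes the characterization of Montel co-echelon spaces from \cite[Theorem 4.7]{BMS}. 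You instead exhibit $\ell_\infty$ as a complemented subspace of $k_\infty(T,V_T)$ via the coordinate projection $\phi$ and the section $\sigma$, and get the contradiction straight from the definition of a Montel space, since the closed unit ball of $\ell_\infty$ is not relatively compact. Both of your continuity checks are sound: $\sigma$ is bounded into the first step $\ell_\infty(T,v_1|_T)$ because $v_1\le 1$, and $\phi$ is bounded on each step $\ell_\infty(T,v_n|_T)$ because the infimum above is a minimum over finitely many positive numbers together with $1$; continuity on the inductive limit then follows from its universal property, and bounded sets of a step are bounded in the limit. In effect you re-prove, in this special case, the easy implication of the Bierstedt--Meise--Summers criterion; your route is longer but self-contained and yields the slightly stronger conclusion that $\ell_\infty$ embeds complementably into $k_\infty(T,V_T)$, while the paper's route is shorter at the cost of quoting the external theorem.
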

\begin{proof}
Suppose that $S\subset\N^2$ is a subset such that $k_\infty(S,V_S)$ is a Banach space,
and let $T=\N^2\setminus S$. By Lemma~\ref{lemma:fin_inter},
for each $n\in\N$ there exists $j_n\in\N$ such that $(n,j_n)\in T$.
We clearly have $v_m(n,j_n)=1$ for all $n\ge m$. Letting $R=\{ (n,j_n) : n\in\N\}$,
we conclude that
\[
\inf_{(i,j)\in R}\frac{v_m(i,j)}{v_1(i,j)}
=\inf_{n\in\N} \frac{v_m(n,j_n)}{v_1(n,j_n)}
>0 \qquad (m\in\N).
\]
The existence of an infinite set $R\subset T$ with the above property
means precisely that $k_\infty(T,V_T)$ is not Montel
\cite[Theorem 4.7]{BMS}.
\end{proof}

Essentially the same argument applies to $k_0(\N^2,V)$. However, more is true.

\begin{lemma}
\label{lemma:no_decomp_2}
Let $V$ be the weight sequence on $\N^2$ given by
\eqref{v_ij}. Then $k_0({\N^2},V)$ is not complete,
and the underlying lcs of $k_0(\N^2,V)$
is not isomorphic to a direct sum of a normed space and a dense subspace of a
reflexive space.
\end{lemma}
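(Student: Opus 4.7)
\emph{Plan.} Both parts of the lemma exploit the specific weight structure $v_n(i,j)=c_j^n$ for $i<n$ and $1$ for $i\geq n$, together with $c_j\to 0$.

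For the non-completeness of $k_0(\N^2,V)$, I would construct a Cauchy sequence in $k_0(\N^2,V)$ that fails to converge. The natural candidate is the diagonal truncation sequence $y^{(N)}=\sum_{n=1}^N e_{(n,n)}$, which lies in $k_0(\N^2,V)$ since it is finitely supported. Its pointwise limit $\chi_\Delta$ belongs to $k_\infty(\N^2,V)$ (all weights satisfy $v_m\leq 1$), but not to $k_0(\N^2,V)$: on the diagonal tail $\{(n,n):n\geq m\}$ the weight $v_m$ equals $1$, so the weighted sequence does not vanish at infinity. Hence any candidate limit in $k_0(\N^2,V)$ would have to equal $\chi_\Delta$, contradicting $\chi_\Delta\notin k_0(\N^2,V)$.

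The crux is verifying Cauchyness of $(y^{(N)})$ in $k_0(\N^2,V)$ via Grothendieck's completion theorem: one reduces to showing that the functional $a\mapsto\sum_i a_{(i,i)}$ on $k_0(V)'$ is $\sigma(k_0(V)',k_0(V))$-continuous on every equicontinuous subset $A\subset k_0(V)'$. Equicontinuous sets can be described as $A_{(\varepsilon_n)}=\{a:\sum_{(i,j)}|a_{(i,j)}|/v_n(i,j)\leq 1/\varepsilon_n\text{ for all }n\}$ for some $(\varepsilon_n)>0$. The constraint $|a_{(i,i)}|\leq v_n(i,i)/\varepsilon_n$ for all $n$ together with $v_n(i,i)=c_i^n\to 0$ (as $n>i$) severely restricts the diagonal values of $a\in A$, forcing (depending on $\varepsilon_n$) either finite support or rapid decay. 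A careful estimate of $\sup_{a\in A}\sum_{i>N}|a_{(i,i)}|$ should yield the required uniform tail control, with the fallback of replacing the diagonal by the first column $\{(n,1):n\in\N\}$ if the weight interplay is more favorable there.

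For the second claim, suppose for contradiction that $k_0(\N^2,V)\cong N\oplus D$ with $N$ normed and $D$ dense in a reflexive lcs $R$. Passing to completions yields $\widetilde{k_0(\N^2,V)}\cong\tilde N\oplus R$, where $\tilde N$ is a Banach space and $R$ is reflexive (hence complete and barrelled). Since the completion embeds continuously into the bidual $k_0(V)''=k_\infty(\N^2,V)$, we obtain a subspace of $k_\infty(\N^2,V)$ with a Banach-plus-reflexive decomposition. I would then adapt the argument of Lemma~\ref{lemma:no_decomp}: the weight structure on $\N^2$ (in particular the sets $L_n$ and their mutual relationships established in Lemmas~\ref{lemma:fin_inter} and~\ref{lemma:no_decomp}) forces any complement of a Banach summand to host an $\ell_\infty$-isomorphic substructure, which contradicts reflexivity of $R$ (reflexive barrelled DF-spaces cannot contain isomorphic copies of $\ell_\infty$).

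The main obstacles are: (i)~the technical verification of Grothendieck's criterion for the Cauchy candidate, which requires a delicate estimate of diagonal tails on equicontinuous sets; and (ii)~the generalization of Lemma~\ref{lemma:no_decomp}'s partition-based argument to abstract (non-partition) direct sum decompositions of the completion, where the relevant structural invariant linking the decomposition to the coordinate structure of $\N^2$ must be identified.
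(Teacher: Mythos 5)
Your route is genuinely different from the paper's, but both halves of it have real gaps. For the non-completeness, your candidate sequence is not Cauchy: the family $\{e^*_{(i,i)}:i\in\N\}$ of coordinate functionals is equicontinuous on $k_0(\N^2,V)=\ind_n c_0(\N^2,v_n)$, because for each fixed $n$ one has $\sup_i v_n(i,i)^{-1}=\max_{i<n}c_i^{-n}<\infty$ (a finite maximum, since $v_n(i,i)=1$ for $i\ge n$). Its polar is therefore a $0$-neighbourhood $U$, and $y^{(N)}-y^{(N-1)}=e_{(N,N)}\notin\ve U$ for $\ve<1$, so consecutive differences do not tend to $0$. The same computation ($\sup_n v_m(n,1)^{-1}=c_1^{-m}<\infty$) kills the first-column fallback. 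So the ``crux'' you defer to a Grothendieck-criterion estimate is not a technical verification that will go through; it fails outright for the candidates you propose, and producing an explicit divergent Cauchy net here is delicate. For the second claim, the entire difficulty is the step you leave open: upgrading the partition-based argument of Lemma~\ref{lemma:no_decomp} to abstract direct-sum decompositions (and locating a copy of $\ell_\infty$ in the reflexive summand) is precisely what is missing, and your reduction to the bidual $k_\infty(\N^2,V)$ is itself unjustified.

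The paper avoids both problems by dualizing. By \cite[Theorem 2.7]{BMS}, $k_0(\N^2,V)'\cong\lambda_1(\N^2,A)$ with $a_n=1/v_n$, and for the weights \eqref{v_ij} this is exactly the classical K\"othe--Grothendieck non-distinguished echelon space (\cite[Corollary 27.18]{MV}). If $k_0(\N^2,V)\cong E_0\oplus E_1$ with $E_0$ normed and $E_1$ dense in a reflexive space, then $E_0'$ is Banach and $E_1'$ is a reflexive Fr\'echet space; both are distinguished, hence so is their direct sum, contradicting the non-distinguishedness of $\lambda_1(\N^2,A)$. Non-completeness is then obtained not by exhibiting a Cauchy net but by citing \cite[Corollary 3.5 and Theorem 3.7]{BMS}, which tie completeness of $k_0(V)$ to exactly the property that fails here. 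If you want to keep an elementary flavour, the realistic salvage is to prove non-distinguishedness of the dual directly (Grothendieck's bounded-set argument) rather than to hunt for a divergent Cauchy sequence in $k_0(\N^2,V)$ itself.
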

\begin{proof}
Recall from \cite[Theorem 2.7]{BMS} that, for each set $I$ and each sequence $V=(v_n)_{n\in\N}$
of weights on $I$ satisfying (W1) and (W2), the strong dual of $k_0(I,V)$ is
topologically isomorphic to the K\"othe echelon space
\[
\lambda_1(I,A)=\Bigl\{ x=(x_i)\in\CC^I : \| x\|_n=\sum_{i\in I} |x_i| a_n(i)<\infty\;\forall n\in\N\Bigr\},
\]
where $a_n(i)=v_n(i)^{-1}$ and $A=(a_n)_{n\in\N}$ is the corresponding K\"othe set.

Let now $I=\N^2$, let $V$ be given by \eqref{v_ij}, and let $E=k_0(\N^2,V)$.
Assume, towards a contradiction, that
$E\cong E_0\mathop{\oplus} E_1$, where $E_0$ is a normed space and $E_1$ is
a dense subspace of a reflexive space.
Hence we have a topological isomorphism $E'\cong E'_0\mathop{\oplus} E'_1$.
Moreover, $E'_0$ is a Banach space, and $E'_1$ is a reflexive Fr\'echet space
(see, e.g., \cite[23.5.(5) and 29.3.(1)]{K1}).
Now recall from \cite[Corollary 25.14]{MV} that all reflexive Fr\'echet spaces
are distinguished, i.e., their strong duals are barrelled.
Clearly, each normed space is distinguished, and a direct sum of two distinguished
spaces is distinguished. Therefore $E'$ is distinguished.

On the other hand, it is easily seen that the K\"othe set $A=(a_n)_{n\in\N}$ on $\N^2$,
where $a_n(i,j)=v_n(i,j)^{-1}$, satisfies the conditions of \cite[Corollary 27.18]{MV}.
Hence $\lambda_1(\N^2,A)$ is not distinguished.
This is a contradiction since $E'\cong\lambda_1(\N^2,A)$ (see above).

Applying now \cite[Corollary 3.5 and Theorem 3.7]{BMS}, we conclude that
$k_0(\N^2,V)$ is not complete.
\end{proof}

\begin{prop}
Let $V$ be the weight sequence on $\N^2$ given by
\eqref{v_ij}. Then
\begin{mycompactenum}
\item
$k_0(\N^2,V)$ and $k_\infty(\N^2,V)$ are topologically
amenable K\"othe co-echelon algebras;
\item
$k_\infty(\N^2,V)$ is unital;
\item
$k_0(\N^2,V)$ is not complete;
\item
there is no decomposition $\N^2=S\sqcup T$ such that $k_\infty(S,V_S)$
is a Banach algebra and such that $k_\infty(T,V_T)$ is a contractible algebra;
\item
the underlying lcs of $k_0(\N^2,V)$
is not isomorphic to a direct sum of a normed algebra and a contractible
K\"othe co-echelon algebra.
\end{mycompactenum}
\end{prop}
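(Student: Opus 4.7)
The plan is to read off the five parts from the lemmas just proved in this section, together with the characterization of contractible K\"othe co-echelon algebras recalled earlier from \cite{KP-contractible}. Parts (i) and (ii) come directly from Example~\ref{example:NN}: the weights defined in \eqref{v_ij} satisfy $v_n(i,j)\le 1$ everywhere, so $v_1\in\ell_\infty(\N^2)$ and $V$ is eventually bounded, whence both $k_0(\N^2,V)$ and $k_\infty(\N^2,V)$ are topologically amenable by Theorem~\ref{thm:kinf}; the constant sequence $\mathbf 1\in\ell_\infty(\N^2,v_1)\subset k_\infty(\N^2,V)$ provides the multiplicative identity. Part (iii) is merely a restatement of the last sentence of Lemma~\ref{lemma:no_decomp_2}.

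For (iv), I would argue by contradiction: a splitting $\N^2=S\sqcup T$ with $k_\infty(S,V_S)$ a Banach algebra and $k_\infty(T,V_T)$ contractible would, by \cite[Theorems~12 and~13]{KP-contractible} (the same citation already used in Example~\ref{example:dirsum}), force $k_\infty(T,V_T)$ to be a Montel space. But this directly contradicts Lemma~\ref{lemma:no_decomp}, which forbids exhibiting $k_\infty(\N^2,V)$ as a direct sum of a Banach summand and a Montel summand.

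Part (v) follows along the same lines. Assume, towards a contradiction, that the underlying lcs of $k_0(\N^2,V)$ splits as $A_0\oplus A_1$, where $A_0$ is a normed algebra and $A_1$ is a contractible K\"othe co-echelon algebra. Then $A_0$ is a normed space. By the citation above, $A_1$ is a Montel space; since the Montel property passes to the completion in the DF setting, $\wt{A_1}$ is a complete Montel, hence reflexive, space, and $A_1$ is a dense subspace of $\wt{A_1}$. Hence $k_0(\N^2,V)$ would be isomorphic to a direct sum of a normed space and a dense subspace of a reflexive space, contradicting Lemma~\ref{lemma:no_decomp_2}. The one non-trivial input used in both (iv) and (v) is the implication ``contractible K\"othe co-echelon algebra $\Rightarrow$ Montel space'' from \cite{KP-contractible}; once this is granted, the proof is pure assembly, and keeping the reflexivity/completion step in (v) clean is the only spot where some care is needed.
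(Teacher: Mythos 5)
Your proposal is correct and follows essentially the same route as the paper: (i)--(iii) are read off from Example~\ref{example:NN} and Lemma~\ref{lemma:no_decomp_2}, and (iv), (v) are deduced from Lemmas~\ref{lemma:no_decomp} and~\ref{lemma:no_decomp_2} via the fact that contractible co-echelon algebras are Montel (or, in the order-$0$ case, have Montel completions). The only imprecision is in (v): for an order-$0$ contractible algebra the cited result \cite[Theorem 13]{KP-contractible} says directly that the \emph{completion} is Montel, so you should invoke that rather than claiming $A_1$ itself is Montel and passing to the completion; the conclusion (dense subspace of a reflexive space) is unaffected.
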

\begin{proof}
Properties (i) and (ii) are mentioned in Example~\ref{example:NN}, while (iii)
is contained in Lemma~\ref{lemma:no_decomp_2}.
To prove (iv) and (v), observe that each contractible co-echelon algebra of order $p>0$
is a Montel space (for $p<\infty$ this follows from Theorem~\ref{finite-order},
while for $p=\infty$ this is \cite[Theorem 12]{KP-contractible}). Also, if a co-echelon
algebra of order $0$
is contractible, then its completion is a Montel space \cite[Theorem 13]{KP-contractible}.
Now (iv) and (v) follow from Lemmas~\ref{lemma:no_decomp}
and~\ref{lemma:no_decomp_2}, respectively.
\end{proof}

We conjecture that (v) holds for $k_\infty(\N^2,V)$ as well.

\end{document}